\newtheorem{thm}{Theorem}[section]
\newtheorem{lem}[thm]{Lemma}
\newtheorem{prop}[thm]{Proposition}
\theoremstyle{definition}
\newtheorem{defn}[thm]{Definition}
\theoremstyle{remark}
\newtheorem{rem}[thm]{Remark}
\numberwithin{equation}{section}
\begin{document}
\title[Asymptotically almost periodic and asymptotically...]{Asymptotically almost periodic and asymptotically almost automorphic vector-valued generalized functions}
\author{Marko Kosti\' c}
\address{Faculty of Technical Sciences,
University of Novi Sad,
Trg D. Obradovi\' ca 6, 21125 Novi Sad, Serbia}
\email{marco.s@verat.net}

{\renewcommand{\thefootnote}{} \footnote{2010 {\it Mathematics
Subject Classification.} 46F05, 43A60, 34C27.
\\ \text{  }  \ \    {\it Key words and phrases.} asymptotically almost periodic distributions,  asymptotically almost periodic ultradistributions, asymptotically almost automorphic distributions, asymptotically almost automorphic ultradistributions,
Banach spaces.
\\  \text{  }  \ \ This research is partially supported by grant 174024 of Ministry
of Science and Technological Development, Republic of Serbia. }}

\begin{abstract}
The main purpose of this paper is to introduce the notion of an asymptotically almost periodic ultradistribution and asymptotically almost automorphic ultradistribution with values in a Banach space, as well as to further analyze the classes of asymptotically almost periodic and asymptotically almost automorphic distributions with values in a Banach space. We
provide some applications of the introduced concepts in the analysis of systems of ordinary differential equations.
\end{abstract}
\maketitle

\section{Introduction and preliminaries}

As it is well-known, the concept of almost periodicity was introduced by Danish mathematician H. Bohr \cite{h.bor} around 1924-1926 and later generalized by many other authors.
Let $I={\mathbb R}$ or $I=[0,\infty),$ and let $f : I \rightarrow X$ be continuous. Given $\epsilon>0,$ we call $\tau>0$ an $\epsilon$-period for $f(\cdot)$ iff\index{$\epsilon$-period}
\begin{align*}
\| f(t+\tau)-f(t) \| \leq \epsilon,\quad t\in I.
\end{align*}
The set constituted of all $\epsilon$-periods for $f(\cdot)$ is denoted by $\vartheta(f,\epsilon).$ It is said that $f(\cdot)$ is almost periodic, a.p. for short, iff for each $\epsilon>0$ the set $\vartheta(f,\epsilon)$ is relatively dense in $I,$ which means that
there exists $l>0$ such that any subinterval of $I$ of length $l$ meets $\vartheta(f,\epsilon)$. The vector space consisting of all almost periodic functions is denoted by $AP(I :X).$\index{function!almost periodic} 

The notion
of a scalar-valued  almost  automorphic  function was introduced by S. Bochner
\cite{bochner} in 1962. In a vector-valued case, definition goes as follows. A bounded continuous function $f : {\mathbb R} \rightarrow X$ is said to be almost automorphic iff for every real sequence $(b_{n})$ there exist a subsequence $(a_{n})$ of $(b_{n})$ and a map $g : {\mathbb R} \rightarrow X$ such that
\begin{align*}
\lim_{n\rightarrow \infty}f\bigl( t+a_{n}\bigr)=g(t)\ \mbox{ and } \  \lim_{n\rightarrow \infty}g\bigl( t-a_{n}\bigr)=f(t),
\end{align*}
pointwise for $t\in {\mathbb R}.$ Due to the famous Bochner's criterion, any almost periodic function has to be almost automorphic; the converse statement is not true, however \cite{diagana}. By $AA({\mathbb R} :X)$ we denote the vector space consisting of all almost automorphic functions.
The
notion of an
almost  automorphic function
on
topological group was introduced and further analyzed in the landmark papers by
W. A. Veech \cite{veech}-\cite{veech-prim} between 1965 and 1967. For more details about almost periodic and almost automorphic functions with values in Banach spaces, we refer the reader to the monographs \cite{diagana} by T. Diagana and \cite{gaston} by G. M. N'Gu\' er\' ekata.  

The notion of a scalar-valued asymptotically almost periodic distribution has been introduced by I. Cioranescu in \cite{ioana-asym}, while the notion of a vector-valued asymptotically almost periodic distribution
has been considered by D. N. Cheban \cite{cheban} following a different approach (see also I. K. Dontvi \cite{dontvi} and A. Halanay, D. Wexler \cite{halanay}). The notion of a scalar-valued almost automorphic distribution and a scalar-valued almost automorphic Colombeau generalized function have been introduced by C. Bouzar, Z. Tchouar \cite{buzar-tress} and C. Bouzar, M. T. Khalladi \cite{buzar-kaladi}. 
Some contributions have been also given by B. Stankovi\' c \cite{stankovic}-\cite{stankovic1}.

In our recent joint research study with D. Velinov \cite{msd-nsjom}, we have analyzed the notions of an almost automorphic distribution and an almost automorphic ultradistribution in Banach space; the notion of an almost periodic ultradistribution in Banach space has been recently analyzed by M. Kosti\' c \cite{mat-bilten} within the framework of Komatsu's theory of ultradistributions, with the corresponding sequences not satisfying the condition (M.3); see also the papers by I. Cioranescu \cite{ioana-tres} and  M. C. G\' omez-Collado \cite{gomezinjo} for first results in this direction. As mentioned in the abstract, the main aim of this paper is to introduce the notions of an asymptotically almost periodic ultradistribution and asymptotically almost automorphic ultradistribution in Banach space, as well as to provide some applications in the qualitative analysis of vector-valued distributional and vector-valued ultradistributional solutions to systems of ordinary differential equations (the notions of an asymptotically almost periodic ultradistribution seem to be not considered elsewhere even in scalar-valued case, while the notion of a vector-valued almost automorphic distribution seems to be completely new, as well). In such a way, we expand and contemplate the results obtained in \cite{buzar-tress}-\cite{buzar-kaladi},
\cite{cheban}, \cite{ioana}-\cite{ioana-asym}, \cite{dontvi}-\cite{halanay}
and \cite{stankovic}-\cite{stankovic1}.

The organization of paper is briefly described as follows. After giving some preliminary results and definitions from the theory of vector-valued ultradistributions (Subsection \ref{vvult}), in Section \ref{collado-automorphic-stankovic} we analyze the notions of asymptotical almost periodicity and asymptotical almost automorphy for vector-valued distributions. Here, we recognize the importance of condition $T\ast \varphi \in AAP({\mathbb R} : X),$ $\varphi \in {\mathcal D}$ ($T\ast \varphi \in AAA({\mathbb R} : X),$ $\varphi \in {\mathcal D}$) for a bounded vector-valued distribution $T\in {\mathcal D}^{\prime}_{L^{1}}(X),$ in contrast with the considerations of I. Cioranescu \cite{ioana-asym} and C. 
Bouzar, F. Z. Tchouar \cite{buzar-tress}, where the above inclusions are required to be valid only for the test functions belonging to the space ${\mathcal D}_{0}.$ Section \ref{collad-stankovic} is devoted to the study of asymptotical almost periodicity and asymptotical almost automorphy for vector-valued ultradistributions. The main result of paper is Theorem \ref{slep-blind-auto-ultra}, where we state an important structural characterization for the class of asymptotically almost periodic (automorphic) vector-valued ultradistributions.
The last section of paper is reserved for certain applications to systems of ordinary differential equations in distribution and ultradistribution spaces.

We use the standard notation throughout the paper. By $(X,\|\cdot \|)$ we denote a non-trivial complex Banach space. The abbreviations $C_{b}(I : X)$ and $C(K:X),$ where $K$ is a non-empty compact subset of ${\mathbb R},$ stand for the spaces consisting of all bounded continuous functions $I \mapsto X$ and all continuous functions $K\mapsto X,$ respectively. Both spaces are Banach endowed with sup-norm. By $C_{0}([0,\infty) :X)$ we denote the closed subspace of $C_{b}([0,\infty) :X)$ consisting of functions vanishing at plus infinity. 

We say that a continuous function $f :{\mathbb R}   \rightarrow X$ is 
asymptotically almost periodic (automorphic) iff there is a function $q\in C_{0}([0,\infty) : X)$ and an almost periodic (automorphic) function $g: {\mathbb R} \rightarrow X$ such that
$f(t)=g(t)+q(t),$ $t\geq 0.$ Let us recall that any (asymptotically) almost periodic function ${\mathbb R} \mapsto X$ is (asymptotically) almost automorphic. By $AAP({\mathbb R} : X),$ resp. $AAA({\mathbb R} : X),$ we denote the vector space consisting of all asymptotically almost periodic, resp. asymptotically almost  automorphic functions.

Let $1\leq p <\infty.$ Then we say that a function $f\in L^{p}_{loc}({\mathbb R} :X)$ is Stepanov $p$-bounded, $S^{p}$-bounded shortly, iff\index{function!Stepanov bounded}
$$
\|f\|_{S^{p}}:=\sup_{t\in {\mathbb R}}\Biggl( \int^{t+1}_{t}\|f(s)\|^{p}\, ds\Biggr)^{1/p}<\infty.
$$
The space $L_{S}^{p}({\mathbb R}:X)$ consisted of all $S^{p}$-bounded functions becomes a Banach space equipped with the above norm.
A function $f\in L_{S}^{p}({\mathbb R}:X)$ is said to be Stepanov $p$-almost periodic, $S^{p}$-almost periodic shortly, iff the function
$
\hat{f} :{\mathbb R} \rightarrow L^{p}([0,1] :X),
$ defined by
$$
\hat{f}(t)(s):=f(t+s),\quad t\in {\mathbb R},\ s\in [0,1]
$$
is almost periodic.
Following H. R. Henr\' iquez \cite{hernan1}, we say that a function $f\in  L_{S}^{p}({\mathbb R}: X)$ is asymptotically Stepanov $p$-almost periodic, asymptotically $S^{p}$-almost periodic shortly, iff there are two locally $p$-integrable functions $g: {\mathbb R} \rightarrow X$ and
$q: [0,\infty)\rightarrow X$ satisfying the following conditions:
\begin{itemize}
\item[(i)] $g(\cdot)$ is $S^p$-almost periodic,
\item[(ii)] $\hat{q}(\cdot)$ belongs to the class $C_{0}([0,\infty) : L^{p}([0,1]:X)),$
\item[(iii)] $f(t)=g(t)+q(t)$ for all $t\geq 0.$
\end{itemize}
By $AAPS^{p}({\mathbb R} : X)$ we denote the space consisting of all  asymptotically Stepanov $p$-almost periodic functions.

The notion of  Stepanov $p$-almost automorphy has been introduced by G. M. N'Gu\'er\'ekata and A. Pankov in \cite{gaston-apankov}: A function $f\in L_{loc}^{p}({\mathbb R}:X)$ is called Stepanov $p$-almost automorphic iff for
every real sequence $(a_{n}),$ there exists a subsequence $(a_{n_{k}})$ \index{ Stepanov almost automorphy}
and a function $g\in L_{loc}^{p}({\mathbb R}:X)$ such that
\begin{align*}
\lim_{k\rightarrow \infty}\int^{t+1}_{t}\Bigl \| f\bigl(a_{n_{k}}+s\bigr) -g(s)\Bigr \|^{p} \, ds =0
\end{align*}
and
\begin{align*}
\lim_{k\rightarrow \infty}\int^{t+1}_{t}\Bigl \| g\bigl( s-a_{n_{k}}\bigr) -f(s)\Bigr \|^{p} \, ds =0
\end{align*}
for each $ t\in {\mathbb R}$; a function $f\in L_{loc}^{p}({\mathbb R}:X)$ is called asymptotically Stepanov $p$-almost automorphic iff there exists an $S^{p}$-almost automorphic
function $g(\cdot)$ and a function $q\in  L_{S}^{p}({\mathbb R}: X)$ such that $f(t)=g(t)+q(t),$ $t\geq 0$ and $\hat{q}\in C_{0}([0,\infty) : L^{p}([0,1]: X)).$ The vector space consisting of all asymptotically $S^{p}$-almost automorphic functions will be denoted by $ AAAS^{p}([0,\infty) : X).$

Concerning distribution spaces, we will use the following elementary notion (cf. L. Schwartz \cite{sch16} for more details). By ${\mathcal D}(X)={\mathcal D}({\mathbb R} : X)$ we denote the Schwartz space of test functions with values in $X$, by ${\mathcal S}(X)={\mathcal S}({\mathbb R} :X)$ we denote the space of rapidly decreasing functions with values in $X$, and by ${\mathcal E}(X)={\mathcal E}({\mathbb R} : X)$ we denote the space of all infinitely differentiable functions with values in $X$; ${\mathcal D}\equiv {\mathcal D}({\mathbb C}),$ 
${\mathcal S}\equiv {\mathcal S}({\mathbb C})$ and ${\mathcal E}\equiv {\mathcal E}({\mathbb C}).$ The spaces of all linear continuous mappings from ${\mathcal D},$ ${\mathcal S}$ and ${\mathcal E}$ into $X$ will be denoted by
${\mathcal D}^{\prime}(X),$ ${\mathcal S}^{\prime}(X)$ and ${\mathcal E}^{\prime}(X),$ respectively. Set ${\mathcal D}_{0}:=\{ \varphi \in {\mathcal D} : \mbox{supp}(\varphi) \subseteq [0,\infty)\}.$

\subsection{Vector-valued ultradistributions}\label{vvult}

In this paper, we will always follow Komatsu's approach to vector-valued ultradistributions, with 
the sequence $(M_p)$ of positive real numbers
satisfying $M_0=1$ and the following conditions:
(M.1): $M_p^2\leq M_{p+1} M_{p-1},\;\;p\in\mathbb{N},$ 
(M.2): $
M_p\leq AH^p\sup_{0\leq i\leq p}M_iM_{p-i},\;\;p\in\mathbb{N},$ for some $A,\ H>1,$ 
(M.3'): $
\sum_{p=1}^{\infty}\frac{M_{p-1}}{M_p}<\infty .
$
Any use of the condition\\ (M.3):
$\sup_{p\in\mathbb{N}}\sum_{q=p+1}^{\infty}\frac{M_{q-1}M_{p+1}}{pM_pM_q}<\infty,$
which is slightly stronger than (M.3$'$), will be explicitly emphasized.

Let us recall that
the Gevrey sequence  $(p!^s)$ satisfies the above conditions ($s>1$). Set
$m_p:=\frac{M_p}{M_{p-1}}$, $p\in\mathbb{N}.$ 

The space of Beurling,
resp., Roumieu ultradifferentiable functions, is
defined by 
$
\mathcal{D}^{(M_p)}
:=\text{indlim}_{K\Subset\Subset\mathbb{R}}\mathcal{D}^{(M_p)}_{K},
$
resp.,
$
\mathcal{D}^{\{M_p\}}
:=\text{indlim}_{K\Subset\Subset\mathbb{R}}\mathcal{D}^{\{M_p\}}_{K},
$
where
$
\mathcal{D}^{(M_p)}_K:=\text{projlim}_{h\to\infty}\mathcal{D}^{M_p,h}_{K},$
resp., 
$\mathcal{D}^{\{M_p\}}_K:=\text{indlim}_{h\to 0}\mathcal{D}^{M_p,h}_{K},
$
\begin{align*}
\mathcal{D}^{M_p,h}_K:=\bigl\{\phi\in C^\infty(\mathbb{R}): \text{supp} \phi\subseteq K,\;\|\phi\|_{M_p,h,K}<\infty\bigr\}
\end{align*}
and
\begin{align*}
\|\phi\|_{M_p,h,K}:=\sup\Biggl\{\frac{h^p|\phi^{(p)}(t)|}{M_p} : t\in K,\;p\in\mathbb{N}_0\Biggr\}.
\end{align*}
Henceforward, the asterisk $*$ is used to denote both, the Beurling case $(M_p)$ or
the Roumieu case $\{M_p\}$. Set ${\mathcal D}^{\ast}_{0}:=\{ \varphi \in {\mathcal D}^{\ast} : \mbox{supp}(\varphi) \subseteq [0,\infty)\}.$
The space
consisted of all continuous linear functions from
$\mathcal{D}^*$ into $X,$ denoted by
$\mathcal{D}^{\prime *}(X):=L(\mathcal{D}^*:X),$ 
is said to be the space of all $X$-valued ultradistributions of $\ast$-class. We also need the notion of space
$\mathcal{E}^*(X),$ defined as  $
\mathcal{E}^{\ast}(X)
:=\text{indlim}_{K\Subset\Subset\mathbb{R}}\mathcal{E}^{\ast}_{K}(X),
$
where in Beurling case
$
\mathcal{E}^{(M_p)}_{K}(X):=\text{projlim}_{h\to\infty}\mathcal{E}^{M_p,h}_{K}(X),$
resp., in Roumieu case
$\mathcal{E}^{\{M_p\}}_{K}(X):=\text{indlim}_{h\to 0}\mathcal{E}^{M_p,h}_{K}(X),
$ and
\begin{align*}
\mathcal{E}^{M_p,h}_{K}(X):=\Biggl\{\phi\in C^\infty(\mathbb{R} :X): \sup_{p\geq 0}\frac{h^{p}\|\phi^{(p)}\|_{C(K : X)}}{M_{p}}<\infty\Biggr\}.
\end{align*}
The space consisting of all linear continuous mappings $\mathcal{E}^*({\mathbb C})\rightarrow X$ is denoted by $\mathcal{E}^{\prime *}(X);$
$\mathcal{E}^{\prime *}:=\mathcal{E}^{\prime *}({\mathbb C}).$
An entire function of the form
$P(\lambda)=\sum_{p=0}^{\infty}a_p\lambda^p$,
$\lambda\in\mathbb{C}$, is of class $(M_p)$, resp., of
class $\{M_p\}$, if there exist $l>0$ and $C>0$, resp., for every
$l>0$ there exists a constant $C>0$, such that $|a_p|\leq Cl^p/M_p$,
$p\in\mathbb{N}$ (\cite{k91}).
The corresponding ultradifferential operator
$P(D)=\sum_{p=0}^{\infty}a_p D^p$ is said to be of class $(M_p)$, resp., of
class $\{M_p\};$ it acts as a continuous linear operator between the spaces ${\mathcal D}^{\ast}$ and ${\mathcal D}^{\ast}$ (${\mathcal D}^{\prime \ast}$ and ${\mathcal D}^{\prime \ast}$).
The convolution of Banach space valued
ultradistributions and scalar-valued ultradifferentiable functions of the same class will be taken in the sense of considerations given on page 685 of \cite{k82}. Let remind ourselves that, for every 
$f\in \mathcal{D}^{\prime *}(X)$ and $\varphi \in \mathcal{D}^*,$ we have $f\ast \varphi \in \mathcal{E}^*(X)$ as well as that the linear mapping $\varphi \mapsto \cdot \ast \varphi : \mathcal{D}^{\prime *}(X) \rightarrow \mathcal{E}^*(X)$ is continuous. The convolution of an $X$-valued
ultradistribution $f(\cdot)$ and a scalar-valued ultradistibution $g\in {\mathcal E}^{\prime \ast }$ with compact support, defined by the identity \cite[(4.9)]{k82}, is an $X$-valued
ultradistribution and the mapping $g \ast \cdot :  \mathcal{D}^{\prime *}(X) \rightarrow \mathcal{D}^{\prime *}(X)$ is continuous. Set $\langle T_{h},\varphi \rangle :=\langle T, \varphi (\cdot -h) \rangle,$ $T\in \mathcal{D}^{\prime *}(X),$ $\varphi \in {\mathcal D}^{\ast}$ ($h>0$). We will use a similar definition for vector-valued distributions.

Assume that the sequence $(M_{p})$ satisfies (M.1), (M.2) and (M.3). Then 
$$
P_{l}(x)=\bigl( 1+x^{2} \bigr)\prod_{p\in {\mathbb N}}\Biggl(1+\frac{x^{2}}{l^{2}m_{p}^{2}}\Biggr),
$$
resp.
$$
P_{r_{p}}(x)=\bigl( 1+x^{2} \bigr)\prod_{p\in {\mathbb N}}\Biggl(1+\frac{x^{2}}{r_{p}^{2}m_{p}^{2}}\Biggr),
$$
defines an ultradifferential operator of class $(M_p)$, resp., of
class $\{M_p\}$; here, $(r_{p})$ is a sequence of positive real numbers tending to infinity. The family consisting of all such sequences will be denoted by ${\mathrm R}$ henceforth. For more details on the subject, the reader may consult \cite{k91}-\cite{k82}.

The spaces of tempered ultradistributions of Beurling,
resp., Roumieu type, are defined by S. Pilipovi\' c \cite{pilip} as duals of the corresponding test spaces
$$
\mathcal{S}^{(M_p)}:=\text{projlim}_{h\to\infty}\mathcal{S}^{M_p,h},\
\mbox{ resp., }\ \mathcal{S}^{\{M_p\}}:=\text{indlim}_{h\to 0}\mathcal{S}^{M_p,h},
$$
where 
\begin{gather*}
\mathcal{S}^{M_p,h}:=\bigl\{\phi\in C^\infty(\mathbb{R}):\|\phi\|_{M_p,h}<\infty\bigr\}\ \ (h>0),
\\
\|\phi\|_{M_p,h}:=\sup\Biggl\{\frac{h^{\alpha+\beta}}{M_\alpha M_\beta}(1+t^2)^{\beta/2}|\phi^{(\alpha)}(t)|:t\in\mathbb{R},
\;\alpha,\;\beta\in\mathbb{N}_0\Biggr\}.
\end{gather*} 
A continuous linear mapping $
\mathcal{S}^{(M_p)} \rightarrow X,$ resp., $
\mathcal{S}^{\{M_p\}}\rightarrow X,$ is said to be an $X$-valued tempered ultradistribution of Beurling, resp., Roumieu type. By $\mathcal{S}^{\prime (M_p)}(X),$ resp. $\mathcal{S}^{\prime \{M_p\} }(X)$ (the common abbreviation will be $\mathcal{S}^{\prime \ast}(X)$), we denote the space consisting of all vector-valued tempered ultradistributions of Beurling, resp., Roumieu type. It is well known that
$\mathcal{S}^{\prime (M_p)}(X) \subseteq \mathcal{D}^{\prime (M_p)}(X)$, resp. $\mathcal{S}^{\prime\{M_p\} }(X) \subseteq \mathcal{D}^{\prime\{M_p\} }(X) .$

Finally, we need some preliminaries concerning the first antiderivative of a vector-valued (ultra-)distribution. 
Let $\eta\in\mathcal{D}_{[1,2]}$  ($\eta\in \mathcal{D}^*_{[1,2]}$) be a fixed test function satisfying $\int_{-\infty}^{\infty}\eta (t)\,dt=1$.
Then, for every fixed $\varphi\in\mathcal{D}$ ($\varphi\in\mathcal{D}^*$), we define $I(\varphi)$ to be
$$
I(\varphi)(x):=\int\limits_{-\infty}^x
\Biggl[\varphi(t)-\eta(t)\int\limits_{-\infty}^{\infty}\varphi(u)\,du\Biggr]\,dt,
\;\;x\in\mathbb{R}.
$$
It can be simply checked that, for every $\varphi\in\mathcal{D}$ ($\varphi\in\mathcal{D}^*$) and $n\in {\mathbb N},$ we have 
$I(\varphi)\in\mathcal{D}$ ($I(\varphi) \in\mathcal{D}^*$), 
$\frac{d^{k}}{dx^{k}}I(\varphi)(x)=\varphi^{(k-1)}(x)-\eta^{(k-1)}(x)\int_{-\infty}^{\infty}\varphi(u)\,du$, $x\in\mathbb{R},$ $k\in {\mathbb N}.$  Define $G^{-1}$ by $G^{-1}(\varphi):=-G(I(\varphi))$, $\varphi\in\mathcal{D}$ ($\varphi\in\mathcal{D}^*$). Then we have
$G^{-1}\in\mathcal{D}'(L(X))$ ($G^{-1}\in {\mathcal D}^{\prime \ast}(L(X))$) and $(G^{-1})'=G$;
more precisely, $-G^{-1}(\varphi')=G(I(\varphi'))=G(\varphi)$, $\varphi\in\mathcal{D}$. For the proof of Theorem \ref{egzist-ultra} below, we will use the following simple fact: if $\langle X, -\varphi^{\prime} \rangle=\langle G, \varphi \rangle,$ $\varphi \in {\mathcal D}_{0}$ ($\varphi \in {\mathcal D}^{\ast}_{0}$) for some $X,\ G\in {\mathcal D}^{\prime}(X)$ ($X,\ G\in {\mathcal D}^{\prime \ast}(X)$), then $X=G^{-1}+\mbox{Const.}$ on $[0,\infty),$ i.e., $\langle X, \varphi \rangle =\langle G^{-1} ,\varphi \rangle +\mbox{Const.} \cdot \int_{0}^{\infty}\varphi(t) \, dt ,$ $\varphi \in {\mathcal D}_{0}$ ($\varphi \in {\mathcal D}^{\ast}_{0}$).

\section[Asymptotical almost periodicity and asymptotical...]{Asymptotical almost periodicity and asymptotical almost automorphy of vector-valued distributions}\label{collado-automorphic-stankovic}

We refer the reader to \cite{buzar-tress}, \cite{ioana} and \cite{msd-nsjom} for the basic results about vector-valued almost periodic distributions and vector-valued almost automorphic distributions.
Let $1\leq p \leq \infty$. Then by ${\mathcal D}_{L^{p}}(X)$ we denote the vector space consisting of all infinitely differentiable functions $f: {\mathbb R} \rightarrow X$ satisfying that for each number $j\in {\mathbb N}_{0}$ we have $f^{(j)}\in L^{p}({\mathbb R} : X).$ The Fr\' echet topology on ${\mathcal D}_{L^{p}}(X)$ is induced by the following system of seminorms
$$
\|f\|_{k}:=\sum_{j=0}^{k}\bigl\|f^{(j)}\bigr\|_{L^{p}({\mathbb R})},\quad f\in {\mathcal D}_{L^{p} (X) } \ \ \bigl( k\in {\mathbb N}\bigr).
$$
If $X={\mathbb C},$ then the above space is simply denoted by ${\mathcal D}_{L^{p}}.$
A linear continuous mapping $f : {\mathcal D}_{L^{1}} \rightarrow X$ is said to be a bounded $X$-valued distribution; the space consisting of such vector-valued distributions will be denoted henceforth by ${\mathcal D}^{\prime}_{L^{1}}(X)$. Endowed with the strong topology, ${\mathcal D}^{\prime}_{L^{1}}(X)$ becomes a complete locally convex space. For every $f\in{\mathcal D}^{\prime}_{L^{1}}(X)$, we have that $f_{| {\mathcal S}} : {\mathcal S} \rightarrow X$ is a tempered $X$-valued distribution (\cite{mat-bilten}).

The space of bounded vector-valued distributions tending to zero at plus infinity, $B^{\prime}_{+,0}(X)$ for short, is defined by
$$
B^{\prime}_{+,0}(X):=\Bigl\{ T\in {\mathcal D}^{\prime}_{L^{1}}(X) : \lim_{h\rightarrow +\infty}\bigl \langle T_{h},\varphi \bigr \rangle=0\mbox{ for all }\varphi \in {\mathcal D}\Bigr\}.
$$
It can be simply verified that the structural characterization for the space $
B^{\prime}_{+,0}({\mathbb C}),$ proved in \cite[Proposition 1]{ioana-asym}, is still valid in vector-valued case as well that the space
$
B^{\prime}_{+,0}({\mathbb C})$ is closed under differentiation.

Let $T\in {\mathcal D}^{\prime}_{L^{1}}(X).$ Then the following assertions are equivalent (\cite{msd-nsjom}):
\begin{itemize}
\item[(i)] $T \ast \varphi \in AP({\mathbb R} : X),$ $\varphi \in {\mathcal D},$ resp., $T \ast \varphi \in AA({\mathbb R} : X),$ $\varphi \in {\mathcal D}.$
\item[(ii)] There exist an integer $k\in {\mathbb N}$ and almost periodic, resp. almost automorphic, functions $f_{j}(\cdot) : [0,\infty) \rightarrow X$ ($1\leq j\leq k$) such that $T=\sum_{j=0}^{k}f_{j}^{(j)}.$
\end{itemize}

We say that a distribution $T\in {\mathcal D}^{\prime}_{L^{1}}(X)$ is almost periodic, resp. almost automorphic, iff $T$ satisfies any of the above two equivalent conditions. By $B^{\prime}_{AP}(X),$ $B^{\prime}_{AA}(X)$ we denote the space consisting of all almost periodic, resp. almost automorphic, distributions.

\begin{defn}\label{definisanie}
A distribution $T\in {\mathcal D}^{\prime}_{L^{1}}(X)$ is said to be asymptotically almost periodic, resp. asymptotically almost automorphic,
iff there exist an almost periodic, resp. almost automorphic, distribution $T_{ap}\in B^{\prime}_{AP}(X),$ resp. $T_{aa}\in B^{\prime}_{AA}(X),$ and a bounded distribution
tending to zero at plus infinity $
Q\in B^{\prime}_{+,0}(X)$ such that $ \langle T ,\varphi \rangle = \langle T_{ap} ,\varphi \rangle + \langle Q ,\varphi \rangle,$ $\varphi \in {\mathcal D}_{0},$
resp. $ \langle T ,\varphi \rangle = \langle T_{aa} ,\varphi \rangle + \langle Q ,\varphi \rangle,$ $\varphi \in {\mathcal D}_{0}.$

By $B^{\prime}_{AAP}(X),$ resp. $B^{\prime}_{AAA}(X),$ we denote the vector space consisting of all asymptotically almost periodic, resp. asymptotically almost automorphic distributions. 
\end{defn}

It is well known that the representation $T=T_{ap}+Q$ is unique in almost periodic case. This is also the case for asymptotical almost automorphy since \cite[Proposition 6, 2.]{buzar-tress} continues to hold in vector-valued case (more precisely, the suppositions  
$T=T_{aa}^{1}+Q=T_{aa}^{2}+Q_{2},$ where $T_{aa}^{1,2}\in B^{\prime}_{AA}(X)$ and $Q_{1,2}\in B^{\prime}_{+,0}(X),$ imply $T_{aa}^{1}=T_{aa}^{2}$ and $Q_{1}=Q_{2}$).

Further on, we would like to observe that the following structural result holds in vector-valued case:

\begin{thm}\label{slep-blind}
Let $T\in {\mathcal D}^{\prime}_{L^{1}}(X).$ Then the following assertions are equivalent:
\begin{itemize}
\item[(i)] $T\in B^{\prime}_{AAP}(X).$
\item[(ii)] $T \ast \varphi \in AAP({\mathbb R} : X),$ $\varphi \in {\mathcal D}_{0}.$ 
\item[(iii)] There exist an integer $k\in {\mathbb N}$ and asymptotically almost periodic functions $f_{j}(\cdot) : {\mathbb R} \rightarrow X$ ($0 \leq j\leq k$) such that $T=\sum_{j=0}^{k}f_{j}^{(j)}$ on $[0,\infty).$
\item[(iv)] There exist an integer $k\in {\mathbb N}$ and bounded asymptotically almost periodic functions $f_{j}(\cdot) : {\mathbb R} \rightarrow X$ ($0\leq j\leq k$) such that $T=\sum_{j=0}^{k}f_{j}^{(j)}$ on $[0,\infty).$
\item[(v)] There exist $S\in {\mathcal D}^{\prime}_{L^{1}}(X),$ $k\in {\mathbb N}$ and bounded asymptotically almost periodic functions $f_{j}(\cdot) : {\mathbb R} \rightarrow X$ ($0\leq j\leq k$) such that $S=\sum_{j=0}^{k}f_{j}^{(j)}$ on ${\mathbb R},$ and $\langle S, \varphi \rangle =\langle T, \varphi \rangle$ for all $\varphi \in {\mathcal D}_{0}.$
\item[(vi)] There exists $S\in {\mathcal D}^{\prime}_{L^{1}}(X)$ such that $\langle S, \varphi \rangle =\langle T, \varphi \rangle$ for all $\varphi \in {\mathcal D}_{0}$ and $S\ast \varphi \in AAP({\mathbb R} : X),$ $\varphi \in {\mathcal D}.$
\item[(vii)] There exists a sequence $(T_{n})$ in ${\mathcal E}(X) \cap AAP({\mathbb R} : X)$ such that $\lim_{n\rightarrow \infty}T_{n}=T$ in ${\mathcal D}_{L^{1}}^{\prime}(X).$ 
\item[(viii)] $T\ast \varphi \in AAP({\mathbb R} : X),$ $\varphi \in {\mathcal D}.$
\end{itemize}
\end{thm}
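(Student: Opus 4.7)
The plan is to prove all eight conditions equivalent via the cycle
$(\mathrm{i})\Rightarrow(\mathrm{iv})\Rightarrow(\mathrm{v})\Rightarrow(\mathrm{vi})\Rightarrow(\mathrm{viii})\Rightarrow(\mathrm{ii})\Rightarrow(\mathrm{vii})\Rightarrow(\mathrm{i})$, treating $(\mathrm{iii})\Leftrightarrow(\mathrm{iv})$ separately: $(\mathrm{iv})\Rightarrow(\mathrm{iii})$ is trivial, while $(\mathrm{iii})\Rightarrow(\mathrm{iv})$ follows by redefining each $f_{j}$ as a bounded continuous function on $(-\infty,0)$ (e.g.\ extend by zero or a constant), which does not affect the action on ${\mathcal D}_{0}$ and preserves AAP since the defining decomposition only restricts behavior on $[0,\infty)$.

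For $(\mathrm{i})\Rightarrow(\mathrm{iv})$ I would apply the already-stated structural characterization of $B^{\prime}_{AP}(X)$ to write $T_{ap}=\sum_{j=0}^{k}g_{j}^{(j)}$ with bounded $g_{j}\in AP({\mathbb R}:X)$, and the vector-valued analogue of Cioranescu's \cite[Proposition 1]{ioana-asym} (explicitly asserted to remain valid in the paragraphs preceding the theorem) to write $Q=\sum_{j=0}^{k}h_{j}^{(j)}$ on ${\mathcal D}_{0}$ with $h_{j}\in C_{0}([0,\infty):X)$; after padding to common length and extending each $h_{j}$ to a bounded continuous function on $\mathbb{R}$, the functions $f_{j}:=g_{j}+h_{j}$ deliver the required bounded AAP decomposition. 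The step $(\mathrm{iv})\Rightarrow(\mathrm{v})$ reduces to recognising $S:=\sum_{j}f_{j}^{(j)}$ as an element of $\mathcal{D}^{\prime}_{L^{1}}(X)$ since each $f_{j}$ is bounded continuous on $\mathbb{R}$. For $(\mathrm{v})\Rightarrow(\mathrm{vi})$ one writes $S\ast\varphi=\sum_{j}f_{j}\ast\varphi^{(j)}$ and checks, splitting $f_{j}=g_{j}+q_{j}$ on $[0,\infty)$, that convolution with any $\psi\in\mathcal{D}$ takes bounded AAP functions to AAP functions (the AP part convolved with $\psi$ is AP, the $C_{0}$ part is $C_{0}$). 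The implication $(\mathrm{vi})\Rightarrow(\mathrm{viii})$ exploits a standard causality observation: if $\operatorname{supp}\varphi\subseteq[-M,M]$ and $t\geq M$, then $\varphi(t-\cdot)\in\mathcal{D}_{0}$, so $(T\ast\varphi)(t)=\langle T,\varphi(t-\cdot)\rangle=\langle S,\varphi(t-\cdot)\rangle=(S\ast\varphi)(t)$, whence $T\ast\varphi-S\ast\varphi$ is compactly supported and $T\ast\varphi\in AAP({\mathbb R}:X)$. The implication $(\mathrm{viii})\Rightarrow(\mathrm{ii})$ is vacuous.

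The main obstacle is closing the cycle via $(\mathrm{ii})\Rightarrow(\mathrm{vii})\Rightarrow(\mathrm{i})$. For $(\mathrm{ii})\Rightarrow(\mathrm{vii})$ I would choose a mollifier $(\rho_{n})\subseteq\mathcal{D}_{0}$ with supports shrinking to $\{0^{+}\}$ and $\int\rho_{n}=1$; then $T_{n}:=T\ast\rho_{n}\in\mathcal{E}(X)$, it is AAP by hypothesis, and $T_{n}\to T$ in $\mathcal{D}^{\prime}_{L^{1}}(X)$ because $\check{\rho}_{n}\ast\phi\to\phi$ in $\mathcal{D}_{L^{1}}$ for every test function $\phi$. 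The genuinely delicate step is $(\mathrm{vii})\Rightarrow(\mathrm{i})$: one decomposes $T_{n}=a_{n}+b_{n}$ on $[0,\infty)$ with $a_{n}\in AP$ and $b_{n}\in C_{0}$, regards $a_{n}$ and $b_{n}$ as elements of $\mathcal{D}^{\prime}_{L^{1}}(X)$, and must argue that they converge separately to some $T_{ap}\in B^{\prime}_{AP}(X)$ and $Q\in B^{\prime}_{+,0}(X)$. This hinges on (a) the uniqueness of the AAP decomposition noted immediately after Definition \ref{definisanie}, (b) closedness of $B^{\prime}_{+,0}(X)$ in $\mathcal{D}^{\prime}_{L^{1}}(X)$, immediate from the defining shift condition, and (c) closedness of $B^{\prime}_{AP}(X)$, which I would obtain by combining the structural characterization with a Bochner-style compactness argument and the Banach--Steinhaus theorem (applied to the convergent, hence equicontinuous, sequence $(T_{n})$). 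Uniqueness is what prevents the splittings $T_{n}=a_{n}+b_{n}$ from drifting into one another in the limit and thus yields the decomposition of $T$ needed for $(\mathrm{i})$.
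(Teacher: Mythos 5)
Most of your cycle is sound, and in places cleaner than the paper's: your direct verification that convolution with a test function carries bounded asymptotically almost periodic functions into $AAP({\mathbb R}:X)$ replaces the paper's appeal to Basit--G\"uenzler for (v) $\Rightarrow$ (vi), and your causality observation for (vi) $\Rightarrow$ (viii) is exactly the mechanism the paper uses for (vi) $\Rightarrow$ (ii) (just note that $T\ast\varphi-S\ast\varphi$ need not be compactly supported --- it merely vanishes on $[M,\infty)$, which is all you need). Your handling of (iii) $\Leftrightarrow$ (iv) and (i) $\Rightarrow$ (iv) likewise matches the paper's, which modifies the $C_{0}$ parts off $[0,\infty)$ (by even extension) and combines the structural characterizations of $B^{\prime}_{AP}(X)$ and $B^{\prime}_{+,0}(X)$.

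The genuine gap is (vii) $\Rightarrow$ (i), the single implication on which your cycle relies to return to (i). Writing $T_{n}=a_{n}+b_{n}$ on $[0,\infty)$ and asking the parts to converge separately requires showing that $(a_{n})$ is Cauchy in ${\mathcal D}^{\prime}_{L^{1}}(X)$, and neither the uniqueness of the decomposition of each fixed $T_{n}$ nor the closedness of $B^{\prime}_{AP}(X)$ and $B^{\prime}_{+,0}(X)$ yields this. The missing ingredient is the supremum formula for almost periodic functions: if $G\in AP({\mathbb R}:X)$ then $\|G\|_{\infty}=\sup_{x\geq a}\|G(x)\|$ for every $a\in{\mathbb R}$. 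One must apply it to the almost periodic function $h\mapsto\langle a_{n}-a_{m},\varphi(h-\cdot)\rangle$: the hypotheses give that this function is small only for \emph{large} $h$ (since $T_{n}-T_{m}$ is small and $b_{n}-b_{m}$ vanishes at $+\infty$), and it is precisely the supremum formula that upgrades this to smallness for \emph{all} $h$, making $(a_{n})$ Cauchy. Uniqueness of the decomposition is the qualitative shadow of this fact and does not substitute for it; a ``Bochner-style compactness argument with Banach--Steinhaus'' does not obviously produce the needed uniform bound either. The paper sidesteps the issue in the distributional theorem by closing the loop through (ii) $\Rightarrow$ (iii) $\Rightarrow$ (i), i.e.\ the vector-valued analogue of Cioranescu's structure theorem, and carries out the limit-splitting argument you propose --- with the supremum formula isolated as Lemma \ref{beograd-leka} --- only in the ultradistributional Theorem \ref{slep-blind-auto-ultra}. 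Your route can therefore be completed, but only after the supremum formula is stated and used; you should also record, for (ii) $\Rightarrow$ (vii), that $T\ast\rho_{n}\to T$ must hold in the strong topology of ${\mathcal D}^{\prime}_{L^{1}}(X)$, i.e.\ uniformly over bounded sets of test functions, not merely pointwise.
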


\begin{proof}
The equivalence of (i)-(iii) can be proved as in scalar-valued case (see \cite[Theorem I, Proposition 1]{ioana-asym}). It is clear that (iv) implies (iii), while the converse statement follows from the fact that there exist functions $g_{j}\in AP({\mathbb R} : X)$ and $q_{j}\in C_{0}([0,\infty) :X)$ for $0\leq j\leq k$ such that
\begin{align*}
\langle T, \varphi \rangle &=\sum_{j=0}^{k}(-1)^{j}\int^{\infty}_{0}f_{j}(t)\varphi^{(j)}(t)\, dt
\\& =\sum_{j=0}^{k}(-1)^{j}\int^{\infty}_{0}\bigl[h_{j}(t)+q_{j,e}(t)\bigr]\varphi^{(j)}(t)\, dt,\quad \varphi \in {\mathcal D}_{0},
\end{align*} 
where $q_{j,e}(\cdot)$ denotes the even extension of function $q_{j}(\cdot)$ to the whole real axis ($0\leq j\leq k$). Therefore, $T=\sum_{j=0}^{k}[h_{j}+q_{j,e}]^{(j)}$ on $[0,\infty)$ and (iii) follows. The implication (iv) $\Rightarrow$ (v) trivially follows from the fact that the expression $S=\sum_{j=0}^{k}f_{j}^{(j)}$
defines an element from ${\mathcal D}^{\prime}_{L^{1}}(X).$ Since the space ${\mathrm A}\equiv AAP({\mathbb R} : X) \cap C_{b}({\mathbb R} : X)$ is uniformly closed (and therefore, $C^{\infty}$-uniformly closed), closed under addition and ${\mathrm A} \ast {\mathcal D} \subseteq {\mathrm A}$ (see \cite{basit-duo-gue} for the notion), an application of \cite[Theorem 2.11]{basit-duo-gue} yields that (v) implies (vi). The implication (vi) $\Rightarrow$ (ii) is trivial, hence we have the equivalence of assertions (i)-(vi). In order to see that (ii) imply (vii), choose
arbitrarily a test function $\rho\in {\mathcal D}$ with supp$(\varphi) \subseteq [0,1].$ Set $\rho_{n}(\cdot):=n\rho(n\cdot),$ for $n\in {\mathbb N}.$ Then (ii) implies $T\ast \rho_{n} \in  
{\mathcal E}(X) \cap AAP({\mathbb R} : X)$ for all $n\in{\mathbb N}.$ Due to the fact that $\lim_{n\rightarrow \infty}T_{n}=T$ in ${\mathcal D}_{L^{1}}^{\prime}(X)$ (see e.g. the second part of proof of \cite[Proposition 7]{buzar-tress}), we get that (vii) holds true. The implication (vii) $\Rightarrow$ (viii) follows from the first part of proof of \cite[Proposition 7]{buzar-tress}, while the implication (viii) $\Rightarrow$ (ii) is trivial,
finishing the proof of theorem.
\end{proof}

Let $1\leq p <\infty,$ and let a function $f\in L^{p}_{loc}({\mathbb R} :X)$ be asymptotically Stepanov $p$-almost periodic. Then the regular distribution associated to $f(\cdot),$ denoted by ${\mathrm f}(\cdot)$ henceforth, is asymptotically almost periodic. In order to see this, let us assume that $p$-integrable functions $g: {\mathbb R} \rightarrow X$ and
$q: [0,\infty)\rightarrow X$ satisfy the conditions from the 
definition of asymptotical Stepanov $p$-almost periodicity. Since 
$$
\int^{\infty}_{-\infty}f(t)\varphi(t)\, dt=\int^{\infty}_{-\infty}[f(t)-g(t)]\varphi(t)\, dt +\int^{\infty}_{-\infty}g(t)\varphi(t)\, dt,\quad \varphi \in {\mathcal D},
$$
and the regular distribution associated to $g(\cdot)$ is almost periodic (\cite{basit-duo-gue}), it suffices to show that the regular distribution associated to $(f-g)(\cdot)$
is 
in class $B^{\prime}_{AAP}(X).$ It can be easily seen that this distribution is bounded, so that Theorem \ref{slep-blind} yields that it is enough to show that, for every fixed $\varphi \in {\mathcal D}_{0},$ we have that $(f-g)\ast \varphi \in AAP({\mathbb R} : X).$ Let supp$(\varphi)\subseteq [a,b]\subseteq [0,\infty),$ where $a,\ b\in {\mathbb N}_{0}.$ Then
\begin{align*}
&[(f-g)\ast \varphi](x)
\\ & =\int^{x-a}_{x-a-1}q(t)\varphi(x-t)\, dt+
\int_{x-a}^{x-a+1}q(t)\varphi(x-t)\, dt +\cdot \cdot \cdot +\int^{x-b+1}_{x-b}q(t)\varphi(x-t)\, dt ,
\end{align*}
for any $\ x\geq b,$ 
and therefore, for any given number $\epsilon>0$ in advance we can find a sufficiently large positive number $x_{0}(\epsilon)\geq b$ such that, for every $x\geq x_{0}(\epsilon),$ we have
$$
\bigl\| [(f-g)\ast \varphi](x) \bigr\| \leq \epsilon,
$$
due to the $S^{p}$-vanishing of function $q(\cdot)$ and the H\"older inequality. 

Therefore, we have proved the following proposition:

\begin{prop}\label{sg-aul}
Let $1\leq p <\infty,$ and let $f\in AAPS^{p}({\mathbb R} : X).$ Then ${\mathrm f}\in B^{\prime}_{AAP}(X).$ 
\end{prop}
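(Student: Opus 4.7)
The plan is to apply the equivalence (i) $\Leftrightarrow$ (ii) of Theorem \ref{slep-blind}, thereby reducing the claim to verifying (a) that ${\mathrm f}\in {\mathcal D}^{\prime}_{L^{1}}(X)$ and (b) that ${\mathrm f}\ast \varphi \in AAP({\mathbb R}:X)$ for every $\varphi \in {\mathcal D}_{0}$. Claim (a) is the familiar observation that a Stepanov $p$-bounded function gives rise to a bounded regular vector-valued distribution; it is obtained by decomposing any $\psi \in {\mathcal D}_{L^{1}}$ over unit-length intervals and applying H\"older's inequality with the conjugate exponent of $p,$ together with the $S^{p}$-boundedness of $f$ inherited from its asymptotic $S^{p}$-almost periodicity.

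For claim (b), I would fix $\varphi \in {\mathcal D}_{0}$ and use the defining decomposition $f(t)=g(t)+q(t)$ for $t\geq 0,$ where $g\in L^{p}_{loc}({\mathbb R}:X)$ is $S^{p}$-almost periodic and $\hat{q}\in C_{0}([0,\infty):L^{p}([0,1]:X))$. By linearity,
$$
{\mathrm f}\ast \varphi={\mathrm g}\ast \varphi+({\mathrm f}-{\mathrm g})\ast \varphi.
$$
The first summand is almost periodic by the classical Basit--G\"unzler result \cite{basit-duo-gue}, to the effect that the regular distribution associated with an $S^{p}$-almost periodic function is almost periodic; hence ${\mathrm g}\ast \varphi\in AP({\mathbb R}:X)\subseteq AAP({\mathbb R}:X)$.

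It remains to show that $({\mathrm f}-{\mathrm g})\ast \varphi \in AAP({\mathbb R}:X)$. Without loss of generality take $\mbox{supp}(\varphi)\subseteq [a,b]$ with $a,\ b\in {\mathbb N}_{0}$. For $x\geq b$ the integration variable in $(f-g)\ast \varphi$ runs over $[x-b,x-a]\subseteq [0,\infty),$ where $f-g$ coincides with $q$. Splitting $[x-b,x-a]$ into its (finitely many) constituent unit intervals and applying H\"older's inequality on each piece bounds $\|[(f-g)\ast \varphi](x)\|$ by a finite sum of $S^{p}$-localised norms of $q$ translated by $x,$ which tend to zero as $x\to +\infty$ by the hypothesis on $\hat{q}$. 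Since the convolution is continuous on ${\mathbb R},$ its restriction to $[0,\infty)$ therefore lies in $C_{0}([0,\infty):X),$ and thus $({\mathrm f}-{\mathrm g})\ast \varphi$ is asymptotically almost periodic with trivial almost periodic component.

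The \emph{main technical obstacle} is the uniform decay estimate in the preceding paragraph, and in particular aligning the unit-interval partition with the $S^{p}$-vanishing of $\hat{q}$; this alignment is harmless because $a,b$ may be chosen integer-valued from the outset. Once this estimate is in hand, the sum ${\mathrm g}\ast \varphi+({\mathrm f}-{\mathrm g})\ast \varphi$ lies in $AAP({\mathbb R}:X)$, condition (ii) of Theorem \ref{slep-blind} is satisfied, and hence ${\mathrm f}\in B^{\prime}_{AAP}(X)$, completing the proof.
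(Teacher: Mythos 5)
Your proposal is correct and follows essentially the same route as the paper: decompose $f=g+q$ on $[0,\infty)$, invoke the Basit--G\"uenzler result for the $S^{p}$-almost periodic part, and handle the remainder by reducing, via the equivalence (i) $\Leftrightarrow$ (ii) of Theorem \ref{slep-blind}, to the unit-interval splitting plus H\"older estimate showing $[(f-g)\ast\varphi](x)\to 0$ as $x\to+\infty$. The only cosmetic difference is that the paper applies Theorem \ref{slep-blind} to the regular distribution associated with $f-g$ rather than to ${\mathrm f}$ itself, which changes nothing.
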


The following analogue of Theorem \ref{slep-blind} holds for asymptotical almost automorphy:

\begin{thm}\label{slep-blind-auto}
Let $T\in {\mathcal D}^{\prime}_{L^{1}}(X).$ Then the following assertions are equivalent:
\begin{itemize}
\item[(i)] $T\in B^{\prime}_{AAA}(X).$
\item[(ii)] $T \ast \varphi \in AAA({\mathbb R} : X),$ $\varphi \in {\mathcal D}_{0}.$ 
\item[(iii)] There exist an integer $k\in {\mathbb N}$ and asymptotically almost automorphic functions $f_{j}(\cdot) : {\mathbb R} \rightarrow X$ ($0\leq j\leq k$) such that $T=\sum_{j=0}^{k}f_{j}^{(j)}$ on $[0,\infty).$
\item[(iv)] There exist an integer $k\in {\mathbb N}$ and bounded asymptotically almost automorphic functions $f_{j}(\cdot) : {\mathbb R} \rightarrow X$ ($0\leq j\leq k$) such that $T=\sum_{j=0}^{k}f_{j}^{(j)}$ on $[0,\infty).$
\item[(v)] There exist $S\in {\mathcal D}^{\prime}_{L^{1}}(X),$ $k\in {\mathbb N}$ and bounded asymptotically almost automorphic functions $f_{j}(\cdot) : {\mathbb R} \rightarrow X$ ($0\leq j\leq k$) such that $S=\sum_{j=0}^{k}f_{j}^{(j)}$ on ${\mathbb R},$ and $\langle S, \varphi \rangle =\langle T, \varphi \rangle$ for all $\varphi \in {\mathcal D}_{0}.$
\item[(vi)] There exists $S\in {\mathcal D}^{\prime}_{L^{1}}(X)$ such that $\langle S, \varphi \rangle =\langle T, \varphi \rangle$ for all $\varphi \in {\mathcal D}_{0}$ and $S\ast \varphi \in AAA({\mathbb R} : X),$ $\varphi \in {\mathcal D}.$
\item[(vii)] There exists a sequence $(T_{n})$ in ${\mathcal E}(X) \cap AAA({\mathbb R} : X)$ such that $\lim_{n\rightarrow \infty}T_{n}=T$ in ${\mathcal D}_{L^{1}}^{\prime}(X).$
\item[(viii)] $T\ast \varphi \in AAA({\mathbb R} : X),$ $\varphi \in {\mathcal D}.$
\end{itemize}
\end{thm}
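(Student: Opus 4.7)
My plan is to mirror the proof of Theorem \ref{slep-blind} line by line, replacing $AP$ by $AA$ and $AAP$ by $AAA$ throughout, since the structural ingredients we need are all available for the almost automorphic class. The intended chain is $(\text{i})\Rightarrow(\text{iii})\Rightarrow(\text{iv})\Rightarrow(\text{v})\Rightarrow(\text{vi})\Rightarrow(\text{ii})\Rightarrow(\text{vii})\Rightarrow(\text{viii})\Rightarrow(\text{ii})$, together with the direct equivalence $(\text{i})\Leftrightarrow(\text{ii})$. The equivalences $(\text{i})\Leftrightarrow(\text{ii})\Leftrightarrow(\text{iii})$ are obtained by transcribing the scalar arguments of Bouzar and Tchouar \cite{buzar-tress} into the Banach-space-valued setting; this is essentially formal once one knows, from the uniqueness discussion after Definition \ref{definisanie}, that the decomposition $T=T_{aa}+Q$ is well defined.

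For $(\text{iii})\Rightarrow(\text{iv})$ I would write each asymptotically almost automorphic $f_{j}$ as $g_{j}+q_{j}$ with $g_{j}\in AA(\mathbb{R}:X)$ and $q_{j}\in C_{0}([0,\infty):X)$ on $[0,\infty)$, replace $q_{j}$ by its even extension $q_{j,e}$ to the whole real line, and set $h_{j}:=g_{j}+q_{j,e}$; since $g_{j}$ is bounded on $\mathbb{R}$ and $q_{j,e}\in C_{b}(\mathbb{R}:X)$, each $h_{j}$ is a bounded asymptotically almost automorphic function and the identity $T=\sum_{j=0}^{k}h_{j}^{(j)}$ on $[0,\infty)$ is obtained by integration by parts exactly as in Theorem \ref{slep-blind}. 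The implication $(\text{iv})\Rightarrow(\text{v})$ is trivial since a finite sum of derivatives of bounded continuous functions automatically defines an element of $\mathcal{D}'_{L^{1}}(X)$.

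The crucial passage is $(\text{v})\Rightarrow(\text{vi})$, where I would again invoke \cite[Theorem 2.11]{basit-duo-gue} applied to the space $\mathrm{A}\equiv AAA(\mathbb{R}:X)\cap C_{b}(\mathbb{R}:X)$. The three hypotheses to verify are: $\mathrm{A}$ is uniformly closed (and therefore $C^{\infty}$-uniformly closed), $\mathrm{A}$ is a vector space, and $\mathrm{A}\ast \mathcal{D}\subseteq \mathrm{A}$. Uniform closedness follows because $AA(\mathbb{R}:X)$ is uniformly closed and $C_{0}([0,\infty):X)$ is uniformly closed, and the decomposition $f=g+q$ is unique for bounded $f$; the convolution property follows from the facts that $AA(\mathbb{R}:X)\ast \mathcal{D}\subseteq AA(\mathbb{R}:X)$ and that convolution of a $C_{0}$-part (extended boundedly to $\mathbb{R}$) with a compactly supported test function remains $C_{0}$ at plus infinity. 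This is the step I expect to require the most care, though none of it is deep. The implications $(\text{vi})\Rightarrow(\text{ii})$ and $(\text{viii})\Rightarrow(\text{ii})$ are trivial.

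Finally, for $(\text{ii})\Rightarrow(\text{vii})$ I would pick a mollifier $\rho\in\mathcal{D}$ with $\mathrm{supp}(\rho)\subseteq[0,1]$ and $\int\rho=1$, set $\rho_{n}(\cdot):=n\rho(n\cdot)\in\mathcal{D}_{0}$, and use (ii) to get $T\ast\rho_{n}\in\mathcal{E}(X)\cap AAA(\mathbb{R}:X)$, with $T\ast\rho_{n}\to T$ in $\mathcal{D}'_{L^{1}}(X)$ by the argument reproduced from the second part of the proof of \cite[Proposition 7]{buzar-tress}. The implication $(\text{vii})\Rightarrow(\text{viii})$ follows from the first part of the same proof, which shows that the $AAA$-property of $T\ast\varphi$ for $\varphi\in\mathcal{D}$ is preserved under $\mathcal{D}'_{L^{1}}(X)$-convergence of a sequence of $AAA$-regular distributions. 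This closes the cycle and finishes the theorem.
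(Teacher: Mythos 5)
Your proposal follows the paper's own proof essentially verbatim: the same chain of implications, the even-extension trick for (iii)$\Rightarrow$(iv), the application of \cite[Theorem 2.11]{basit-duo-gue} to ${\mathrm A}=AAA({\mathbb R}:X)\cap C_{b}({\mathbb R}:X)$ for (v)$\Rightarrow$(vi), and the mollifier argument with \cite[Proposition 7]{buzar-tress} for the cycle (ii)$\Rightarrow$(vii)$\Rightarrow$(viii)$\Rightarrow$(ii). The only cosmetic difference is that you justify (i)--(iii) via the almost automorphic source \cite{buzar-tress} rather than by transcribing Cioranescu's almost periodic argument, which is if anything the more natural reference; the proof is correct.
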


\begin{proof}
The equivalence of (i)-(iii) follows similarly as in the proofs of \cite[Theorem I, Proposition 1]{ioana-asym}, given in almost periodic case.
The equivalence of (iii) and (iv) as well as the fact that (iv) implies (v) can be proved as in the previous theorem.
Since the space ${\mathrm A}\equiv AAA({\mathbb R} : X)  \cap C_{b}({\mathbb R} : X)$ is uniformly closed ($C^{\infty}$-uniformly closed), closed under addition and ${\mathrm A} \ast {\mathcal D} \subseteq {\mathrm A},$ we can apply again \cite[Theorem 2.11]{basit-duo-gue} in order to see that (v) implies (vi). The implication (vi) $\Rightarrow$ (ii) is trivial, so that we have the equivalence of the assertions (i)-(vi). The remaining part of proof can be deduced as in almost periodic case.  
\end{proof}

Let $1\leq p <\infty,$ and let a function $f\in L^{p}_{loc}({\mathbb R} :X)$ be asymptotically Stepanov $p$-almost automorphic. Then the regular distribution associated to $f(\cdot)$ is asymptotically almost automorphic, which can be seen as in the case of asymptotical almost periodicity, with appealing to \cite{buzar-tress} in place of \cite{basit-duo-gue}, for almost automorphic part:

\begin{prop}\label{sg-aul-auto}
Let $1\leq p <\infty,$ and let $f\in AAAS^{p}({\mathbb R} : X).$ Then ${\mathrm f}\in B^{\prime}_{AAA}(X).$ 
\end{prop}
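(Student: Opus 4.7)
The plan is to mimic the argument used for Proposition \ref{sg-aul}, exploiting the decomposition from the definition of $AAAS^{p}(\mathbb{R}:X)$. Write $f(t)=g(t)+q(t)$ for $t\geq 0$, where $g:\mathbb{R}\to X$ is $S^{p}$-almost automorphic and $\hat{q}\in C_{0}([0,\infty):L^{p}([0,1]:X))$. For any $\varphi\in\mathcal{D}$, split the regular distribution as
$$
\int_{-\infty}^{\infty}f(t)\varphi(t)\,dt=\int_{-\infty}^{\infty}[f(t)-g(t)]\varphi(t)\,dt+\int_{-\infty}^{\infty}g(t)\varphi(t)\,dt,
$$
so it suffices to verify that the regular distribution ${\mathrm g}$ lies in $B^{\prime}_{AA}(X)$ while the regular distribution associated to $(f-g)$ (which agrees with $q$ on $[0,\infty)$) belongs to $B^{\prime}_{+,0}(X)$ and, better, belongs to $B^{\prime}_{AAA}(X)$.

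For the $S^{p}$-almost automorphic part, I would invoke the analogue for almost automorphy of the convolution stability property used in \cite{basit-duo-gue}, which in the $S^{p}$-almost automorphic setting is contained in \cite{buzar-tress}: the convolution $g\ast\varphi$ is almost automorphic for every $\varphi\in\mathcal{D}$, and this suffices (together with boundedness of ${\mathrm g}$ in $\mathcal{D}^{\prime}_{L^{1}}(X)$, which follows from $S^{p}$-boundedness of $g$ and H\"older's inequality) to conclude that ${\mathrm g}\in B^{\prime}_{AA}(X)$.

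For the remaining part, I want to show that for every $\varphi\in\mathcal{D}_{0}$ with $\mathrm{supp}(\varphi)\subseteq[a,b]\subseteq[0,\infty)$, $a,b\in\mathbb{N}_{0}$, the function $(f-g)\ast\varphi$ lies in $C_{0}([0,\infty):X)\subseteq AAA(\mathbb{R}:X)$. For $x\geq b$,
$$
[(f-g)\ast\varphi](x)=\sum_{k=a}^{b-1}\int_{x-k-1}^{x-k}q(t)\varphi(x-t)\,dt,
$$
and, by H\"older's inequality, each summand is bounded by $\|\hat{q}(x-k-1)\|_{L^{p}([0,1]:X)}\cdot\|\varphi\|_{L^{p'}}$. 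Since $\hat{q}\in C_{0}([0,\infty):L^{p}([0,1]:X))$, each such summand tends to $0$ as $x\to\infty$, so $[(f-g)\ast\varphi](x)\to0$. Since this holds for every $\varphi\in\mathcal{D}_{0}$, the implication (ii)$\Rightarrow$(i) of Theorem \ref{slep-blind-auto} applied to $(f-g)$ (boundedness in $\mathcal{D}^{\prime}_{L^{1}}(X)$ again being straightforward from $S^{p}$-boundedness of both $f$ and $g$) yields that the regular distribution of $(f-g)$ is in $B^{\prime}_{AAA}(X)$, so finally ${\mathrm f}\in B^{\prime}_{AAA}(X)$.

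The main obstacle is really the almost automorphic piece: unlike the almost periodic setting, where one has a clean Bochner-type characterization and \cite{basit-duo-gue} directly yields almost periodicity of ${\mathrm g}$, in the almost automorphic case one must lean on the argument in \cite{buzar-tress} that the convolution of an $S^{p}$-almost automorphic function with a test function is almost automorphic. Granted that, everything else is a routine translation of the almost periodic argument.
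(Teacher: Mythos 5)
Your proposal is correct and follows essentially the same route as the paper: decompose $f=g+q$ on $[0,\infty)$, handle the $S^{p}$-almost automorphic part by the convolution result of Bouzar--Tchouar (exactly the substitution of \cite{buzar-tress} for \cite{basit-duo-gue} that the paper indicates), and show via H\"older's inequality and the $S^{p}$-vanishing of $q$ that $(f-g)\ast\varphi$ tends to zero at $+\infty$ for $\varphi\in\mathcal{D}_{0}$, so that Theorem \ref{slep-blind-auto} applies. No discrepancies worth noting.
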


Concerning the assertions of Theorem \ref{slep-blind} and Theorem \ref{slep-blind-auto}, it is worth noting the following:

\begin{rem}\label{boldinjo}
\begin{itemize}
\item[(i)] The validity of (vi) for some $S\in {\mathcal D}^{\prime}_{L^{1}}(X)$ implies its validity for $S$ replaced therein with $S_{Q}=S+Q,$ where $Q \in {\mathcal D}^{\prime}_{L^{1}}(X)$ and supp$(Q) \subseteq (-\infty,0].$ For this, it suffices to observe that $(Q\ast \varphi)(x)=\langle Q, \varphi(x-\cdot)\rangle =0$ for all $x\geq \text{sup}(\text{supp}(\varphi)),$ $\varphi \in {\mathcal D}.$ 
\item[(ii)] Theorem \ref{slep-blind} and Theorem \ref{slep-blind-auto} imply that the spaces $B^{\prime}_{AAP}(X)$ and $B^{\prime}_{AAA}(X)$ are closed under differentiation, as well as that $B^{\prime}_{AAP}(X)$ and $B^{\prime}_{AAA}(X)$ are closed subspaces of ${\mathcal D}^{\prime}_{L^{1}}(X)$ (for this, we can apply the
equivalences of (i) and (ii) in the above-mentioned theorems, the fact that the spaces $AAP({\mathbb R} : X)$ and $AAA({\mathbb R} : X)$ are uniformly closed, and the fact that the translations $\{\varphi (h-\cdot) : h\in {\mathbb R}\}$ of a given function $\varphi \in {\mathcal D}$ forms a bounded subset of ${\mathcal D}_{L^{1}}$). 
\item[(iii)] The implication (i) $\Rightarrow$ (viii) of Theorem \ref{slep-blind} (Theorem \ref{slep-blind-auto}), can be proved directly; see  Remark \ref{basit-kil-osama} below for ultradistibutional case.
\end{itemize}
\end{rem}

\section[Asymptotical almost periodicity and asymptotical...]{Asymptotical almost periodicity and asymptotical almost automorphy of vector-valued ultradistributions}\label{collad-stankovic}

For any $h>0,$ we define
$$
{\mathcal D}_{L^{1}}\bigl((M_{p}),h\bigr):=\Biggl\{ f\in {\mathcal D}_{L^{1}} \ ; \ \|f\|_{1,h}:=\sup_{p\in {\mathbb N}_{0}}\frac{h^{p}\|f^{(p)}\|_{1}}{M_{p}}<\infty \Biggr\} .
$$
Then $({\mathcal D}_{L^{1}}((M_{p}),h),\| \cdot \|_{1,h})$ is a Banach space and the space of all $X$-valued bounded Beurling ultradistributions of class $(M_{p})$, resp., $X$-valued bounded Roumieu ultradistributions of class $\{M_{p}\}$,  
is defined to be the space consisting of all linear continuous mappings from ${\mathcal D}_{L^{1}}((M_{p})),$ resp., $
{\mathcal D}_{L^{1}}(\{M_{p}\}),$ into $X,$ where
$$
{\mathcal D}_{L^{1}}\bigl((M_{p})\bigr):=\text{projlim}_{h\rightarrow +\infty}{\mathcal D}_{L^{1}}\bigl((M_{p}),h\bigr),
$$
resp.,
$$
{\mathcal D}_{L^{1}}\bigl(\{M_{p}\}\bigr):=\text{indlim}_{h\rightarrow 0+}{\mathcal D}_{L^{1}}\bigl((M_{p}),h\bigr).
$$
These spaces, equppied with the strong topologies, will be shortly denoted by $
{\mathcal D}_{L^{1}}^{ \prime }((M_{p}) : X),$ resp., $
{\mathcal D}_{L^{1}}^{ \prime }(\{M_{p}\} : X).$ We will use the shorthand $
{\mathcal D}_{L^{1}}^{ \prime \ast}(M_{p} : X)$ to denote either $
{\mathcal D}_{L^{1}}^{ \prime }((M_{p}) : X)$ or $
{\mathcal D}_{L^{1}}^{ \prime }(\{M_{p}\} : X);$ $
{\mathcal D}_{L^{1}}^{ \prime \ast}(M_{p} )\equiv 
{\mathcal D}_{L^{1}}^{ \prime \ast}(M_{p} : {\mathbb C}).$ As it can be easily proved, $
{\mathcal D}_{L^{1}}^{ \prime \ast}(M_{p} : X)$ is a complete locally convex space. 

It is well known that ${\mathcal D}^{(M_{p})},$ resp. ${\mathcal D}^{\{M_{p}\}},$ is a dense subspace of
${\mathcal D}_{L^{1}}((M_{p}) ),$ resp., $
{\mathcal D}_{L^{1}}(\{M_{p}\} ),$ as well as that  ${\mathcal D}_{L^{1}}((M_{p}) )\subseteq 
{\mathcal D}_{L^{1}}(\{M_{p}\} )$. It can be simply proved that 
$f_{| {\mathcal S}^{(M_{p})}} : {\mathcal S}^{(M_{p})} \rightarrow X,$ resp., $f_{| {\mathcal S}^{\{M_{p}\}}} : {\mathcal S}^{\{M_{p}\}} \rightarrow X,$ is a tempered $X$-valued ultradistribution of class $(M_{p}),$ resp.,
of class $\{M_{p}\}.$ The space $
{\mathcal D}_{L^{1}}^{ \prime \ast }(M_{p} : X)$ is closed under the action of ultradifferential operators of $\ast$-class. 

The space of bounded vector-valued ultradistributions tending to zero at plus infinity, $B^{\prime \ast}_{+,0}(X)$ for short, is defined by
$$
B^{\prime \ast}_{+,0}(X):=\Bigl\{ T\in {\mathcal D}^{\prime \ast }_{L^{1}}(M_{p} : X) : \lim_{h\rightarrow +\infty}\bigl \langle T_{h},\varphi \bigr \rangle=0\mbox{ for all }\varphi \in {\mathcal D}^{\ast}\Bigr\}.
$$

Let $T\in {\mathcal D}^{\prime \ast}_{L^{1}}(X).$ Then we say that $T$ is almost periodic, resp. almost automorphic, iff $T$ satisfies:
$T \ast \varphi \in AP({\mathbb R} : X),$ $\varphi \in {\mathcal D}^{\ast},$ resp., $T \ast \varphi \in AA({\mathbb R} : X),$ $\varphi \in {\mathcal D}^{\ast}.$
By $B^{\prime \ast}_{AP}(X),$ resp. $B^{\prime \ast}_{AA}(X),$ we denote the vector space consisting of all almost periodic, resp. almost automorphic, ultradistributions of $\ast$-class.

\begin{defn}\label{rk-mbo}
An ultradistribution $T\in {\mathcal D}^{\prime \ast}_{L^{1}}(X)$ is said to be asymptotically almost periodic, resp. asymptotically almost automorphic,
iff there exist an almost periodic, resp. almost automorphic, ultradistribution $T_{ap}\in B^{\prime \ast}_{AP}(X),$ resp. $T_{aa}\in B^{\prime \ast}_{AA}(X),$ and a bounded ultradistribution
tending to zero at plus infinity $
Q\in B^{\prime \ast}_{+,0}(X)$ such that $ \langle T ,\varphi \rangle = \langle T_{ap} ,\varphi \rangle + \langle Q ,\varphi \rangle,$ $\varphi \in {\mathcal D}_{0}^{\ast},$
resp. $ \langle T ,\varphi \rangle = \langle T_{aa} ,\varphi \rangle + \langle Q ,\varphi \rangle,$ $\varphi \in {\mathcal D}_{0}^{\ast}.$

By $B^{\prime \ast}_{AAP}(X),$ resp. $B^{\prime \ast}_{AAA}(X),$ we denote the vector space consisting of all asymptotically almost periodic, resp. automorphic, ultradistributions of $\ast$-class.
\end{defn}

As in distribution case, decomposition of an asymptotically almost periodic (automorphic) ultradistribution of $\ast$-class into its almost periodic (automorphic) part and bounded,
tending to zero at plus infinity part, is unique. The space $B^{\prime \ast}_{AAP}(X),$ resp. $B^{\prime \ast}_{AAA}(X),$
is closed under the action of ultradifferential operators of $\ast$-class. This follows from the fact that this is true for the space $B^{\prime \ast}_{AP}(X),$ resp. $B^{\prime \ast}_{AA}(X)$ (see \cite{mat-bilten} and \cite{msd-nsjom}), as well as that, for every 
$ Q\in
B^{\prime \ast}_{+,0}(X)$ and for every ultradifferential operator $P(D)$ of $\ast$-class, we have $\langle P(D) Q,\varphi (\cdot-h)\rangle =\langle Q, [P(D)\varphi](\cdot-h)\rangle,$ $h\in {\mathbb R}$.

For the sequel, we need the following preparation. Let ${\mathrm A}\subseteq {\mathcal D}^{\prime \ast}(X).$ Following B. Basit and H. G\"uenzler \cite{basit-duo-gue}, whose examinations have been carried out in distributional case, we have recently introduced the following notion in \cite{mat-bilten}:
$$
\mathcal{D}^{\prime *}_{{\mathrm A}}(X):=\Bigl\{  T\in\ \mathcal{D}^{\prime *}(X) : T\ast \varphi \in {\mathrm A}\mbox{ for all }\varphi \in \mathcal{D}^{*}\Bigr\}.
$$
Then $
\mathcal{D}^{\prime *}_{{\mathrm A}}(X)=
\mathcal{D}^{\prime *}_{{\mathbb A}\cap B}(X),$ for any set $B\subseteq L_{loc}^{1}({\mathbb R} : X)$ that contains $C^{\infty}({\mathbb R} : X),$ as well as the set $
\mathcal{D}^{\prime *}_{{\mathrm A}}(X)$ is closed under the action of ultradifferential operators of $\ast$-class. Furthermore, the following holds \cite{mat-bilten}:

\begin{itemize}
\item[(i)]
Assume that there exist an ultradifferential operator $P(D)=\sum_{p=0}^{\infty}a_p D^p$
of class $(M_p)$, resp., of
class $\{M_p\},$ and $f,\ g\in \mathcal{D}^{\prime *}_{{\mathrm A}}(X)$ such that $T=P(D)f+g.$ If ${\mathrm A}$ is closed under addition, then
$T\in \mathcal{D}^{\prime *}_{{\mathrm A}}(X).$
\item[(ii)] If ${\mathrm A} \cap C({\mathbb R} : X)$ is closed under uniform convergence, $T\in 
{\mathcal D}_{L^{1}}^{ \prime }((M_{p}) : X)$ and $T\ast \varphi \in {\mathrm A},$ $\varphi \in {\mathcal D}^{(M_{p})},$ then there is a number $h>0$ such that for each compact set $K\subseteq {\mathbb R}$ we have $T\ast \varphi \in {\mathrm A},$ $\varphi \in \mathcal{D}^{M_p,h}_K.$ 
\item[(iii)] Assume that $T\in \mathcal{D}^{\prime (M_{p})}(X)$ and there exists $h>0$ such that for each compact set $K\subseteq {\mathbb R}$ we have $T\ast \varphi \in {\mathrm A},$ $\varphi \in \mathcal{D}^{M_p,h}_K.$ If  $(M_p)$ additionally satisfies (M.3), then there exist $l>0$ and two elements 
$f,\ g\in {\mathrm A}$ such that $T=P(D)f+g.$
\end{itemize}

Now we will focus our attention to the case that ${\mathrm A}=AAP({\mathbb R} : X),$ resp. ${\mathrm A}=AAA({\mathbb R} : X).$ Then ${\mathrm A}$ is closed under the uniform convergence and addition, and we have ${\mathrm A}\subseteq \mathcal{D}^{\prime *}_{{\mathrm A}}(X)$ (\cite{nova-mono}). Hence, as a special case of the above assertions, we have the following lemma:

\begin{lem}\label{mare-basitinjo}
Let $(M_{p})$ 
satisfy the conditions \emph{(M.1), (M.2)} and \emph{(M.3'),} and let $T\in 
{\mathcal D}^{ \prime (M_{p})}(X),$ resp., $T\in
{\mathcal D}^{ \prime \{M_{p}\}}(X).$ Then the following holds:
\begin{itemize}
\item[(i)]
Suppose that there exist an ultradifferential operator $P(D)=\sum_{p=0}^{\infty}a_p D^p$
of class $(M_p)$, resp., of
class $\{M_p\},$ and $f,\ g\in \mathcal{D}^{\prime *}_{AAP({\mathbb R} : X)}(X),$ resp. $f,\ g\in \mathcal{D}^{\prime *}_{AAA({\mathbb R} : X)}(X),$  such that $T=P(D)f+g.$ Then
$T\in \mathcal{D}^{\prime *}_{AAP({\mathbb R} : X)}(X),$ resp. $T\in \mathcal{D}^{\prime *}_{AAA({\mathbb R} : X)}(X).$
\item[(ii)] If $T\in 
{\mathcal D}_{L^{1}}^{ \prime }((M_{p}) : X)$ and $T\ast \varphi \in AAP({\mathbb R} : X),$ $\varphi \in {\mathcal D}^{(M_{p})},$ resp. $T\ast \varphi \in AAA({\mathbb R} : X),$ $\varphi \in {\mathcal D}^{(M_{p})},$ then there is a number $h>0$ such that for each compact set $K\subseteq {\mathbb R}$ we have $T\ast \varphi \in AAP({\mathbb R} : X),$ $\varphi \in \mathcal{D}^{M_p,h}_K,$ resp. $T\ast \varphi \in AAA({\mathbb R} : X),$ $\varphi \in \mathcal{D}^{M_p,h}_K.$ 
\item[(iii)] Assume that $T\in \mathcal{D}^{\prime (M_{p})}(X)$ and there is a number $h>0$ such that for each compact set $K\subseteq {\mathbb R}$ we have $T\ast \varphi \in AAP({\mathbb R} : X),$ $\varphi \in \mathcal{D}^{M_p,h}_K,$ resp. $T\ast \varphi \in AAA({\mathbb R} : X),$ $\varphi \in \mathcal{D}^{M_p,h}_K.$ If  $(M_p)$ additionally satisfies \emph{(M.3),} then there exist $l>0$ and bounded functions
$f,\ g\in AAP({\mathbb R} : X),$ resp. $f,\ g\in AAA({\mathbb R} : X),$ such that $T=P_{l}(D)f+g.$
\end{itemize}
\end{lem}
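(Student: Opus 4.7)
The plan is to simply specialize the three general assertions stated in the bullets immediately preceding the lemma (which were established in \cite{mat-bilten} for an arbitrary class $\mathrm{A}\subseteq\mathcal{D}^{\prime\ast}(X)$) to the two cases $\mathrm{A}=AAP(\mathbb{R}:X)$ and $\mathrm{A}=AAA(\mathbb{R}:X)$. All the real work therefore consists of checking that these two choices of $\mathrm{A}$ satisfy the structural hypotheses required by those general results, namely: closure under addition, uniform closedness of $\mathrm{A}\cap C(\mathbb{R}:X)$, and the convolution stability $\mathrm{A}\ast\mathcal{D}^{\ast}\subseteq\mathrm{A}$ (equivalently, $\mathrm{A}\subseteq\mathcal{D}^{\prime\ast}_{\mathrm{A}}(X)$).

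First, I would note that both $AAP(\mathbb{R}:X)$ and $AAA(\mathbb{R}:X)$ are evidently vector spaces, so they are closed under addition, and that the standard uniform closedness of $AP(\mathbb{R}:X)$ and $AA(\mathbb{R}:X)$ in $C_{b}(\mathbb{R}:X)$, together with the uniform closedness of $C_{0}([0,\infty):X)$, gives uniform closedness of $AAP\cap C(\mathbb{R}:X)$ and $AAA\cap C(\mathbb{R}:X)$; these facts are recorded in \cite{diagana}, \cite{gaston} and \cite{buzar-tress}. The relation $\mathrm{A}\subseteq\mathcal{D}^{\prime\ast}_{\mathrm{A}}(X)$ is precisely the statement that the convolution of an asymptotically almost periodic (respectively, asymptotically almost automorphic) function with a test function of $\ast$-class stays in the same class, which is already referred to via \cite{nova-mono} in the paragraph above the lemma and reduces to treating the almost periodic/automorphic component and the $C_{0}$-component separately.

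With these properties verified, assertion (i) is an immediate instance of the first bulleted result, assertion (ii) follows from the second bulleted result combined with uniform closedness, and assertion (iii) is obtained from the third bulleted result under the additional hypothesis (M.3); the latter supplies the decomposition $T=P_{l}(D)f+g$ with $f,g\in\mathrm{A}$. The only remaining point is the word \emph{bounded} in the conclusion of (iii): here I would simply observe that every asymptotically almost periodic (resp. asymptotically almost automorphic) function $\mathbb{R}\to X$ is bounded, being the sum of a bounded almost periodic (respectively, almost automorphic) function on $\mathbb{R}$ and a function from $C_{0}([0,\infty):X)$; so the $f,g$ produced by the general statement are automatically bounded.

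I do not expect a genuine obstacle in this proof, since the bulleted general results do all the heavy lifting. The only slightly delicate step is the verification $\mathrm{A}\ast\mathcal{D}^{\ast}\subseteq\mathrm{A}$ for the asymptotic classes, because one must argue that the $C_{0}$-part of the convolution remains in $C_{0}([0,\infty):X)$ and does not pollute the almost periodic/automorphic part; this is a routine support-and-decay argument analogous to the one carried out in the justification of Proposition \ref{sg-aul} (and of its automorphic counterpart Proposition \ref{sg-aul-auto}), and is the same kind of computation that underlies the cited reference.
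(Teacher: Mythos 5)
Your proposal matches the paper's treatment exactly: the paper gives no separate proof of this lemma, simply noting that $AAP({\mathbb R}:X)$ and $AAA({\mathbb R}:X)$ are closed under addition and uniform convergence and satisfy ${\mathrm A}\subseteq\mathcal{D}^{\prime\ast}_{{\mathrm A}}(X)$ (citing \cite{nova-mono}), and then invoking the three general bulleted assertions from \cite{mat-bilten} as you do. Your extra verifications of the hypotheses and the remark on boundedness in (iii) go slightly beyond what the paper records but are in the same spirit.
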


We need one more lemma, which is probably known in the existing literature.

\begin{lem}\label{beograd-leka} (Supremum formula)
Let $f\in AA({\mathbb R} : X).$ Then for each $a\in {\mathbb R}$ we have $\|f\|_{\infty}=\sup_{x\geq a}\|f(x)\|.$
\end{lem}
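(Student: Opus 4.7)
The plan is to show the nontrivial inequality $\|f\|_{\infty}\le \sup_{x\ge a}\|f(x)\|$; the reverse is immediate. Write $M_{a}:=\sup_{x\ge a}\|f(x)\|$. The idea is to exploit the almost automorphy to translate mass from any point of $\mathbb{R}$ into the half-line $[a,\infty)$, and then to translate it back.

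First I would pick any real sequence $b_{n}\to +\infty$. By the definition of almost automorphy, there exist a subsequence $(a_{n})$ of $(b_{n})$ and a function $g:\mathbb{R}\to X$ such that $g(t)=\lim_{n\to\infty}f(t+a_{n})$ and $f(t)=\lim_{n\to\infty}g(t-a_{n})$ pointwise in $t\in\mathbb{R}$. Since $a_{n}\to+\infty$, for every fixed $t\in\mathbb{R}$ we have $t+a_{n}\ge a$ for all sufficiently large $n$, hence $\|f(t+a_{n})\|\le M_{a}$ eventually. Passing to the limit yields $\|g(t)\|\le M_{a}$ for every $t\in\mathbb{R}$, i.e.\ $\|g\|_{\infty}\le M_{a}$.

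The second step then uses the second limit relation: for any $t\in\mathbb{R}$,
\[
\|f(t)\|=\lim_{n\to\infty}\|g(t-a_{n})\|\le \|g\|_{\infty}\le M_{a},
\]
so $\|f\|_{\infty}\le M_{a}$. Combined with the trivial inequality $M_{a}\le \|f\|_{\infty}$, this gives the claimed identity.

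There is no real obstacle here, only one subtle point worth flagging: the argument does not use any continuity of $g$ (which in general is only measurable for an a.a.\ function), because both limit relations are pointwise and we only need the pointwise bound $\|g(t)\|\le M_{a}$. Everything else is a direct application of the Bochner-type definition of almost automorphy recalled in the introduction.
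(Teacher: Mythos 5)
Your proof is correct and follows essentially the same route as the paper's: both apply the Bochner-type definition to the sequence $b_{n}=n$ (tending to $+\infty$), using the forward limit $f(t+a_{n})\to g(t)$ to land in $[a,\infty)$ and the backward limit $g(t-a_{n})\to f(t)$ to carry the bound back to all of $\mathbb{R}$. Your two-step organization (first $\|g\|_{\infty}\le M_{a}$, then $\|f\|_{\infty}\le\|g\|_{\infty}$) is just a cleaner writeup of the paper's double-limit argument at a single point $y$.
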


\begin{proof}
Essentially, we need only to prove that for each $a,\ y \in {\mathbb R}$ and $\epsilon>0$ we have
\begin{align}\label{strudla}
\|f(y)\| \leq  \sup_{x\geq a}\|f(x)\|+\epsilon.
\end{align}
By definition of almost automorphy, for the sequence $(b_{n}:=n)$ we can extract a subsequence $(a_{n})$ of it such that $f(y)=\lim_{n\rightarrow \infty}\lim_{k\rightarrow \infty}f(y-a_{k}+a_{n}).$ This, in particular, implies that we can find two integers $n_{0}(\epsilon),\, k_{0}(\epsilon)  \in {\mathbb N}$ such that, for every $k\geq k_{0}(\epsilon),$
we have $\| f(y-a_{k}+a_{n})-f(y)\|\leq \epsilon.$ This gives \eqref{strudla} and finishes the proof of lemma. 
\end{proof}

Now we are able to state the following important result:

\begin{thm}\label{slep-blind-auto-ultra}
Let $(M_{p})$ 
satisfy the conditions \emph{(M.1), (M.2)} and \emph{(M.3)',}
and let $T\in {\mathcal D}^{\prime \ast}_{L^{1}}(M_{p} : X).$ Consider the following assertions:
\begin{itemize}
\item[(i)] There exist an element $S\in {\mathcal D}^{\prime \ast}_{L^{1}}(M_{p} :X)$, a number $l>0$ in the Beurling case/a sequence $(r_{p})\in {\mathrm R}$ in the Roumieu case, and bounded functions $f,\ g\in AAP({\mathbb R} : X),$ resp.  $f,\ g\in AAA({\mathbb R} : X),$ such that $S=P_{l}(D)f+g,$ resp.  $S=P_{r_{p}}(D)f+g,$ and $S=T$ on $[0,\infty).$
\item[(ii)] There exist a number $l>0,$ resp. a sequence $(r_{p})\in {\mathrm R},$ and bounded functions $f,\ g\in AAP({\mathbb R} : X),$ resp.   $f,\ g\in AAA({\mathbb R} : X),$ such that $T=P_{l}(D)f+g,$ resp. $T=P_{r_{p}}(D)f+g,$ on $[0,\infty).$
\item[(iii)] There exist an ultradifferential operator $P(D)=\sum_{p=0}^{\infty}a_p D^p$
of $\ast$-class and bounded functions $f_{1},\ f_{2}\in AAP({\mathbb R} : X),$ resp. $f_{1},\ f_{2}\in AAA({\mathbb R} : X),$ such that $T=P(D)f_{1}+f_{2}$  on $[0,\infty).$
\item[(iv)] There exist an element $S\in {\mathcal D}^{\prime \ast}_{L^{1}}(M_{p} :X)$, an ultradifferential operator $P(D)=\sum_{p=0}^{\infty}a_p D^p$
of $\ast$-class and bounded functions $f_{1},\ f_{2}\in AAP({\mathbb R} : X),$ resp. $f_{1},\ f_{2}\in AAA({\mathbb R} : X),$ such that $S=P(D)f_{1}+f_{2}$  and $S=T$ on $[0,\infty).$
\item[(v)] $T\in B^{\prime \ast}_{AAP}(X),$ resp. $T\in B^{\prime \ast}_{AAA}(X),$ there exist an element $S\in {\mathcal D}^{\prime \ast}_{L^{1}}(M_{p} :X)$, an ultradifferential operator $P(D)=\sum_{p=0}^{\infty}a_p D^p$
of $\ast$-class and bounded functions $f_{1},\ f_{2}\in AAP({\mathbb R} : X),$ resp. $f_{1},\ f_{2}\in AAA({\mathbb R} : X),$ such that $S=P(D)f_{1}+f_{2}$  and $S=T$ on $[0,\infty).$
\item[(vi)] $T\in B^{\prime \ast}_{AAP}(X),$ resp. $T\in B^{\prime \ast}_{AAA}(X),$ and there exists an element $S\in {\mathcal D}^{\prime \ast}_{L^{1}}(M_{p} :X)$ such that  $S=T$ on $[0,\infty)$ and $S\ast \varphi \in AAP({\mathbb R} : X),$ $\varphi \in {\mathcal D}^{\ast},$ resp. $S\ast \varphi \in AAA({\mathbb R} : X),$ $\varphi \in {\mathcal D}^{\ast}.$
\item[(vii)] $T\in B^{\prime \ast}_{AAP}(X),$ resp. $T\in B^{\prime \ast}_{AAA}(X).$
\item[(viii)] $T \ast \varphi \in AAP({\mathbb R} : X),$ $\varphi \in {\mathcal D}^{\ast}_{0},$ resp. $T \ast \varphi \in AAA({\mathbb R} : X),$ $\varphi \in {\mathcal D}^{\ast}_{0}.$
\item[(ix)] There exists a sequence $(T_{n})$ in ${\mathcal E}^{\ast}(X) \cap AAP({\mathbb R} : X),$ resp. ${\mathcal E}^{\ast}(X) \cap AAA({\mathbb R} : X),$ such that $\lim_{n\rightarrow \infty}T_{n}=T$ in ${\mathcal D}_{L^{1}}^{\prime \ast}(X).$
\item[(x)] $T \ast \varphi \in AAP({\mathbb R} : X),$ $\varphi \in {\mathcal D}^{\ast},$ resp. $T \ast \varphi \in AAA({\mathbb R} : X),$ $\varphi \in {\mathcal D}^{\ast}.$
\end{itemize}
Then we have \emph{(i)} $\Rightarrow $ \emph{(ii)} $\Rightarrow $ \emph{(iii)} $\Rightarrow $ \emph{(iv)} $\Rightarrow $ \emph{(v)} $\Rightarrow $ \emph{(vi)} $\Leftrightarrow $ \emph{(vii)} $\Leftrightarrow $ \emph{(viii)} $\Leftrightarrow $ \emph{(ix)} $\Leftrightarrow $ \emph{(x)}. Furthermore, if $(M_{p})$  
additionally satisfies the condition \emph{(M.3),} then the assertions \emph{(i)}-\emph{(x)} are equivalent for the Beurling class.
\end{thm}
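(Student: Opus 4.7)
The plan is to prove the chain (i)$\Rightarrow$(ii)$\Rightarrow$(iii)$\Rightarrow$(iv)$\Rightarrow$(v)$\Rightarrow$(vi), then establish the equivalence (vi)$\Leftrightarrow$(vii)$\Leftrightarrow$(viii)$\Leftrightarrow$(ix)$\Leftrightarrow$(x), and finally close the loop under \emph{(M.3)} in the Beurling case via Lemma \ref{mare-basitinjo}\emph{(iii)}.

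The chain (i)$\Rightarrow$(iv) is essentially formal: (i)$\Rightarrow$(ii) combines $S=P_l(D)f+g$ on ${\mathbb R}$ with $S=T$ on $[0,\infty)$; (ii)$\Rightarrow$(iii) takes $P(D):=P_l(D)$ (resp.\ $P_{r_p}(D)$); and (iii)$\Rightarrow$(iv) sets $S:=P(D)f_1+f_2$ globally, which lies in ${\mathcal D}^{\prime \ast}_{L^{1}}(M_p:X)$ since bounded continuous functions yield bounded regular ultradistributions and this space is stable under $\ast$-class ultradifferential operators. For (iv)$\Rightarrow$(v), I decompose $f_j=g_j+q_j$ on $[0,\infty)$ with $g_j \in AP({\mathbb R}:X)$ (resp.\ $AA$) and $q_j \in C_{0}([0,\infty):X)$, set $T_{ap}:=P(D)g_1+g_2 \in B^{\prime \ast}_{AP}(X)$ (resp.\ $B^{\prime \ast}_{AA}(X)$), and $Q:=T-T_{ap}$; the key point is that for $\varphi\in\mathcal{D}^\ast$ with fixed compact support and $h$ sufficiently large, $\varphi(\cdot-h)\in \mathcal{D}^\ast_0$, whence $\langle Q_h,\varphi\rangle=\langle S-T_{ap},\varphi(\cdot-h)\rangle$ reduces to integrals of $q_j$ against translates of $P(-D)\varphi$, vanishing as $h\to+\infty$ by the $C_0$-property of $q_j$ and the $L^1$-bound on $P(-D)\varphi$ (whose compact support is preserved by the ultradifferential operator). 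Then (v)$\Rightarrow$(vi) follows with the same $S$, since $S\ast\varphi=f_1\ast P(D)\varphi+f_2\ast\varphi \in AAP({\mathbb R}:X)$ (resp.\ $AAA$) for $\varphi\in\mathcal{D}^\ast$ by closure of these classes under convolution with test functions.

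For the five-way equivalence, (vi)$\Rightarrow$(vii) is immediate. The unified core argument for (vii)$\Rightarrow$(vi), (vii)$\Rightarrow$(viii), and (vii)$\Rightarrow$(x) is the following: given $T=T_{ap}+Q$ on $\mathcal{D}^\ast_0$ and $\varphi\in\mathcal{D}^\ast$ with $\mathrm{supp}\,\varphi\subseteq[a,b]$, for $x\geq b$ one has $\varphi(x-\cdot)\in\mathcal{D}^\ast_0$ and $(T\ast\varphi)(x)=(T_{ap}\ast\varphi)(x)+(Q\ast\varphi)(x)$; setting $g:=T_{ap}\ast\varphi\in AP$ (resp.\ $AA$) and $q:=T\ast\varphi-g$ on $[0,\infty)$ gives $q\in C_{0}([0,\infty):X)$ by continuity of the pieces and decay of $Q\ast\varphi$, so $T\ast\varphi\in AAP$ (resp.\ $AAA$); taking $S:=T$ also delivers (vi). The implication (x)$\Rightarrow$(viii) is trivial, (viii)$\Rightarrow$(x) follows from the translation trick $\psi(t):=\varphi(t-N-1)\in\mathcal{D}^\ast_0$ for $\mathrm{supp}\,\varphi\subseteq[-N,N]$ combined with translation-invariance of $AAP$/$AAA$, and (ix)$\Leftrightarrow$(x) follows from mollification $T\ast\rho_n\to T$ in ${\mathcal D}_{L^1}^{\prime\ast}(X)$ ($\rho\in\mathcal{D}^\ast$, $\int\rho=1$) together with uniform closedness. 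The crucial implication (x)$\Rightarrow$(vii) is proved directly: using uniqueness of the $AP$-$C_0$ (resp.\ $AA$-$C_0$) decomposition on $[0,\infty)$ and Lemma \ref{beograd-leka} to obtain $\|\mathcal{A}(f)\|_\infty\leq\|f\|_\infty$ for the AP-extraction operator $\mathcal{A}$, define $\langle T_{ap},\varphi\rangle:=\mathcal{A}(T\ast\check\varphi)(0)$; linearity comes from uniqueness, continuity from $|\langle T_{ap},\varphi\rangle|\leq\|T\|_{{\mathcal D}^{\prime\ast}_{L^1}}\|\varphi\|_{\mathcal{D}^\ast_{L^1}}$, and translation-invariance of $\mathcal{A}$ gives $T_{ap}\ast\varphi=\mathcal{A}(T\ast\varphi)\in AP$, so $T_{ap}\in B^{\prime\ast}_{AP}(X)$. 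Setting $Q:=T-T_{ap}$, one checks $\langle Q_h,\varphi\rangle=(T\ast\check\varphi-\mathcal{A}(T\ast\check\varphi))(h)\to 0$, hence $Q\in B^{\prime\ast}_{+,0}(X)$.

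For the final statement under \emph{(M.3)} in the Beurling case, I apply Lemma \ref{mare-basitinjo}\emph{(ii)} to (x) to extract $h>0$ with $T\ast\varphi\in AAP$ for all $\varphi\in\mathcal{D}^{M_p,h}_K$ and compact $K$; Lemma \ref{mare-basitinjo}\emph{(iii)} (requiring \emph{(M.3)}) then produces $l>0$ and bounded $f,g\in AAP({\mathbb R}:X)$ with $T=P_l(D)f+g$ globally, which is (ii) and hence loops back. The principal obstacles are the verification $Q\in B^{\prime \ast}_{+,0}(X)$ in (iv)$\Rightarrow$(v), requiring careful control of infinite-order $P(-D)$ acting on translated test functions against the asymptotic decay of $q_j$, and the construction of the bounded ultradistribution $T_{ap}$ in (x)$\Rightarrow$(vii) via the AP-extraction operator, which demands that the decomposition estimate propagate through convolution to secure continuity on $\mathcal{D}^\ast_{L^1}$.
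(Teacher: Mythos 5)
Your proposal is correct and, for most of the theorem, follows the same architecture as the paper: the formal chain (i)$\Rightarrow$(ii)$\Rightarrow$(iii)$\Rightarrow$(iv), the decomposition $f_j=g_j+q_j$ with the $C_{0}$-parts tested against translates of $P(-D)\varphi$ for (iv)$\Rightarrow$(v), the direct computation for (v)$\Rightarrow$(vi), the support argument for (vii)$\Rightarrow$(vi)/(viii)/(x), the mollification for (ix)$\Leftrightarrow$(x), and the closing of the loop under (M.3) via Lemma \ref{mare-basitinjo}(ii)--(iii). Where you genuinely diverge is in the return to (vii). The paper obtains (vii) from (ix): it splits the approximants as $T_{n}=G_{n}+Q_{n}$ on $[0,\infty)$, uses the supremum formula (Lemma \ref{beograd-leka} in the automorphic case) to upgrade an eventual estimate on $\langle G_{n}-G_{m},\varphi(h-\cdot)\rangle$ to a global one, shows $(G_{n})$ is Cauchy in ${\mathcal D}_{L^{1}}^{\prime \ast}(X)$, and passes to the limit to produce $T_{ap}$ and $Q$. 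You instead prove (x)$\Rightarrow$(vii) directly by building $T_{ap}$ as $\langle T_{ap},\varphi\rangle:=\mathcal{A}(T\ast\check{\varphi})(0)$ with $\mathcal{A}$ the extraction operator coming from uniqueness of the $AP$--$C_{0}$ (resp.\ $AA$--$C_{0}$) splitting; Lemma \ref{beograd-leka} enters through $\|\mathcal{A}f\|_{\infty}\leq\|f\|_{\infty}$, which, combined with the boundedness of the translate family $\{\varphi(\cdot-x)\}$ in ${\mathcal D}_{L^{1}}(\ast)$ and density of ${\mathcal D}^{\ast}$ therein, yields continuity of $T_{ap}$, while translation-equivariance of $\mathcal{A}$ gives $T_{ap}\ast\varphi=\mathcal{A}(T\ast\varphi)\in AP({\mathbb R}:X)$. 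This buys a cleaner proof that avoids the Cauchy-sequence argument altogether (and incidentally supplies the substance behind the paper's unproved assertion that (x)$\Rightarrow$(vii) is ``trivial''); likewise your forward-translation trick $\psi:=\varphi(\cdot-N-1)$ gives (viii)$\Rightarrow$(x) without detouring through (ix). In a full write-up you should state the continuity estimate for $T_{ap}$ against the concrete seminorm $\|\cdot\|_{1,h}$ furnished by the continuity of $T$ (there is no single norm on ${\mathcal D}_{L^{1}}^{\prime\ast}(X)$), and verify explicitly that $\mathcal{A}$ commutes with backward as well as forward translations, which is needed for $T_{ap}\ast\varphi=\mathcal{A}(T\ast\varphi)$ at all real arguments; both checks are routine.
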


\begin{proof}
For the sake of brevity, we will consider only the asymptotically almost periodic case for the Beurling class (in the case of almost automorphy, it is only worth noting 
that, for the proof of implication (ix) $\Rightarrow $ (vii), we need to use the supremum formula deduced in Lemma \ref{beograd-leka}).  
The implications (i) $\Rightarrow $ (ii) $\Rightarrow $ (iii) $\Rightarrow $ (iv) and (vi) $\Rightarrow$ (vii) are trivial, while the implication (vii)
$\Rightarrow$ (viii) can be deduced as it has been done for scalar-valued distributions (cf. the proof of implication (i) $\Rightarrow$ (ii) of \cite[Theorem I]{ioana-asym}).
In order to see that (iv) implies (v), we only need to prove that the assumptions of (iv) imply $T\in B^{\prime (M_{p})}_{AAP}(X).$
Suppose that $f_{i}(\cdot)=g_{i}(\cdot)+q_{i}(\cdot)$ on $[0,\infty),$ where $f_{i}\in AP({\mathbb R} : X)$ and $g_{i}\in C_{0}([0,\infty) : X)$ ($i=1,2$).
Then $T_{1}:=P(D)g_{1}+g_{2}$ is an almost periodic ultradistribution of Beurling class due to \cite[Theorem 2]{mat-bilten} (see \cite[Theorem 3.1]{msd-nsjom} for almost automorphic case), and all that we need to show is $T_{2}:=P(D)(f_{1}-g_{1})+(f_{2}-g_{2})\in B^{\prime (M_{p})}_{AAP}(X),$ i.e., 
\begin{align*}
\lim_{h\rightarrow +\infty} \Bigl \langle P(D)\bigl(f_{1}-g_{1}\bigr)+\bigl(f_{2}-g_{2}\bigr)  , \varphi (\cdot -h)  \Bigr \rangle=0,\quad \varphi \in {\mathcal D}^{(M_{p})} .
\end{align*}
Towards this end, assume that $-\infty <a<b<+\infty$ and supp$(\varphi) \subseteq [a,b].$ Let $\epsilon>0$ be given. Then there 
exist a sufficiently large finite number $h_{0}(\epsilon)>0$ and a sufficiently large finite number $c_{\varphi}>0,$ independent of $\epsilon ,$ such that, for every $h\geq h_{0}(\epsilon),$ we have the following (cf. also the proof of \cite[Theorem 1]{mat-bilten}):
\begin{align*}
& \Biggl\| \Bigl \langle P(D) \bigl(f_{1}-g_{1}\bigr)+\bigl(f_{2}-g_{2}\bigr)  , \varphi (\cdot -h)  \Bigr \rangle \Biggr\|
\\ & =\Biggl\| \sum_{p=0}^{\infty}(-1)^{p}a_{p}\int^{\infty}_{-\infty}\bigl( f_{1}(t)-g_{1}(t)\bigr)\varphi^{(p)}(t-h)\, dt +\int^{\infty}_{-\infty}\bigl( f_{2}(t)-g_{2}(t)\bigr)\varphi (t-h)\, dt \Biggr\|
\\& =\Biggl\| \sum_{p=0}^{\infty}(-1)^{p}a_{p}\int^{b}_{a}\bigl( f_{1}(t+h)-g_{1}(t+h)\bigr)\varphi^{(p)}(t)\, dt +\int^{b}_{a}\bigl( f_{2}(t)-g_{2}(t)\bigr)\varphi (t)\, dt \Biggr\|
\\ &= \Biggl\| \sum_{p=0}^{\infty}(-1)^{p}a_{p}\int^{b}_{a}q_{1}(t+h)\varphi^{(p)}(t)\, dt +\int^{b}_{a}q_{2}(t)\varphi (t)\, dt \Biggr\|
\\ & \leq \epsilon \Biggl[\sum_{p=0}^{\infty}\bigl|a_{p} \bigr| \bigl\|\varphi^{(p)}\bigr\|_{L^{1}}+ \bigl\|\varphi\bigr\|_{L^{1}}\Biggr]
 \leq \epsilon c_{\varphi}.
\end{align*}
This yields (v). The implication (v) $\Rightarrow$ (vi) follows by applying Lemma \ref{mare-basitinjo}. In order to see that (vii) implies (vi), assume that $T=T_{ap}+Q$ on $[0,\infty),$ where $T_{ap}$ and $Q$ satisfy the requirements of Definition \ref{rk-mbo}. Put $S:=T_{ap}+Q$ on ${\mathbb R}.$ Then we need to prove that $S\ast \varphi \in AAP({\mathbb R} : X),$ $\varphi \in {\mathcal D}^{\ast},$ i.e., that
$\lim_{h\rightarrow +\infty}(Q\ast \varphi)(h)=0,$ $\varphi \in {\mathcal D}^{\ast}.$ But, this follows from the fact that $(Q\ast \varphi)(h)=\langle Q, \check{\varphi}(\cdot-h)\rangle=\langle Q_{h}, \check{\varphi} \rangle \rightarrow 0$ as $h\rightarrow +\infty$ dy definition of space $B^{\prime (M_{p})}_{+,0}(X).$
We can show that (viii) implies (ix) as in the proof of implication of (ii) $\Rightarrow$ (vii) in Theorem \ref{slep-blind}. The implication (ix) $\Rightarrow$ (x) follows similarly as in the first part of the proof of \cite[Proposition 7]{buzar-tress}, while the implication (x) $\Rightarrow$ (vii) is trivial.
Now we will prove the implication (ix) $\Rightarrow $ (vii). Let $(T_{n})$ be a sequence in ${\mathcal E}^{(M_{p})}(X) \cap AAP({\mathbb R} : X)$ such that $\lim_{n\rightarrow \infty}T_{n}=T$ in ${\mathcal D}_{L^{1}}^{\prime (M_{p})}(X),$ and let $T_{n}=G_{n}+Q_{n}$ on $[0,\infty)$ for some $G_{n}\in AP({\mathbb R} : X)$ 
and $Q_{n}\in C_{b}({\mathbb R} : X)$ tending to zero at plus infinity ($n\in {\mathbb N}$). Let
$B$ be a bounded subset of ${\mathcal D}_{L^{1}}^{\prime (M_{p})}(X),$ and let $\epsilon>0$ be given. It can be simply shown that the set $B':=B\cup \{\varphi( \pm \cdot \pm h) : h\in {\mathbb R},\ \varphi \in B\}$ is a bounded subset of ${\mathcal D}_{L^{1}}^{\prime (M_{p})}(X),$ as well. Then $(G_{n}-G_{m})+(Q_{n}-Q_{m})=T_{n}-T_{m},$ $m,\ n\in {\mathbb N},$ so that there exists an integer $n_{0}(\epsilon)\in {\mathbb N}$ such that, for every $m,\ n\in {\mathbb N}$ with $\min(m,n)\geq n_{0}(\epsilon),$ we have:
$$
\sup_{\varphi \in B'}\Bigl \| \bigl \langle G_{n}-G_{m},\varphi \bigr \rangle +  \bigl \langle Q_{n}-Q_{m},\varphi \bigr \rangle \Bigr \| \leq \epsilon ;
$$
especially,
\begin{align}\label{utakmica-pre}
\sup_{\varphi \in B',\, h\in {\mathbb R}}\Bigl \| \bigl \langle G_{n}-G_{m},\varphi(h-\cdot) \bigr \rangle +  \bigl \langle Q_{n}-Q_{m},\varphi (h-\cdot) \bigr \rangle \Bigr \| \leq \epsilon .
\end{align}
By the foregoing, we have $\lim_{h\rightarrow +\infty}\|\langle Q_{n}-Q_{m},\varphi (h-\cdot) \rangle \| =0,$ for  every $m,\ n\in {\mathbb N}$ with $\min(m,n)\geq n_{0}(\epsilon),$ so that there exists a sufficiently large number $M_{\epsilon}>0$ such that
$$
\Bigl \| \bigl \langle G_{n}-G_{m},\varphi (h-\cdot) \bigr \rangle \Bigr \| \leq \epsilon,\quad m,\, n\geq  n_{0}(\epsilon),\  h\geq M_{\epsilon},\ \varphi \in B'.
$$
By  Lemma \ref{beograd-leka}  and the almost periodicity of function on the left hand side of the above inequality, we get
\begin{align}\label{utakmica}
\Bigl \| \bigl \langle G_{n}-G_{m},\varphi (h-\cdot) \bigr \rangle \Bigr \| \leq \epsilon,\quad m,\, n\geq  n_{0}(\epsilon),\  h\in {\mathbb R},\ \varphi \in B'.
\end{align}
This clearly implies that the sequence $(\langle G_{n}, \varphi \rangle )$ is Cauchy in $X$ and therefore convergent ($\varphi \in {\mathcal D}_{L^{1}}$). 
Set $\langle G, \varphi \rangle :=\lim_{m\rightarrow \infty}\langle G_{m}, \varphi \rangle$ for all $\varphi \in {\mathcal D}_{L^{1}}$.
Letting $m\rightarrow \infty$ in \eqref{utakmica}, we obtain
\begin{align}\label{utakmicalj}
\Bigl \| \bigl \langle G_{n}-G,\varphi (h-\cdot) \bigr \rangle \Bigr \| \leq \epsilon,\quad  n\geq  n_{0}(\epsilon),\  h\in {\mathbb R},\ \varphi \in B'.
\end{align}
This, in turn, implies that $G$ is the limit of convergent sequence $(G_{n})$ in ${\mathcal D}_{L^{1}}^{\prime (M_{p})}(X)$ and therefore $G\in {\mathcal D}_{L^{1}}^{\prime (M_{p})}(X).$
Furthermore, \eqref{utakmicalj} shows that $\lim_{n\rightarrow \infty}G_{n}\ast \varphi =G\ast \varphi ,$ $\varphi \in {\mathcal D}^{(M_{p})}$ in $C_{b}({\mathbb R} :  X),$ so that
$G\ast \varphi \in AP({\mathbb R} : X),$ $\varphi \in {\mathcal D}^{(M_{p})};$ hence, $G\in B^{\prime \ast}_{AP}(X).$ 
Define $\langle Q, \varphi \rangle :=\lim_{m\rightarrow \infty}\langle T-G_{m}, \varphi \rangle$ for all $\varphi \in {\mathcal D}_{L^{1}}$. Then $Q\in {\mathcal D}_{L^{1}}^{\prime (M_{p})}(X)$ and all that we need to show is that $Q$ tends to zero at plus infinity. For this, observe first that $Q$ has to be the limit of a Cauchy sequence $(Q_{n})$ in ${\mathcal D}_{L^{1}}^{\prime (M_{p})}(X)$ and that combining \eqref{utakmica-pre} and \eqref{utakmica} yields
\begin{align}\label{utakmica-hajka}
\Bigl \| \bigl \langle Q_{n}-Q_{m},\varphi (h-\cdot) \bigr \rangle \Bigr \| \leq 2\epsilon,\quad m,\, n\geq  n_{0}(\epsilon),\  h\in {\mathbb R},\ \varphi \in B'.
\end{align}
Due to \eqref{utakmica-hajka}, we obtain
\begin{align*}
\Bigl \| \bigl \langle Q_{n}-Q,\varphi (h-\cdot) \bigr \rangle \Bigr \| \leq 2\epsilon,\quad n\geq  n_{0}(\epsilon),\  h\in {\mathbb R},\ \varphi \in B'.
\end{align*}
With $n= n_{0}(\epsilon),$ the above simply yields the existence of a number $h_{0}(\epsilon)>0$ such that 
\begin{align*}
\Bigl \| \bigl \langle Q,\varphi (h-\cdot) \bigr \rangle \Bigr \| \leq 3\epsilon,\quad h\geq h_{0},\ \varphi \in B'.
\end{align*}
Hence, (vii) is proved.
Finally, if $(M_{p})$ 
satisfies the condition (M.3), then the implication (x) $\Rightarrow$ (i) follows by applying Lemma \ref{mare-basitinjo}(ii)-(iii) with $S=T$, and therefore, the assertions (i)-(x) are mutually equivalent for the Beurling class. 
\end{proof}

Using the equivalence of parts (vii) and (x) in Theorem \ref{slep-blind-auto-ultra}, as well as the continuity properties of convolution, it readily follows that $B^{\prime \ast}_{AAP}(X),$ resp. $B^{\prime \ast}_{AAA}(X),$ is a closed subspace of ${\mathcal D}^{\prime \ast}_{L^{1}}(M_{p} : X).$ Concerning Theorem \ref{slep-blind-auto-ultra}, we have the following observations:

\begin{rem}\label{basit-kil-osama}
The implication (vii) $\Rightarrow$ (x) of Theorem \ref{slep-blind-auto-ultra} can be proved directly, as explained below. Let $T=T_{ap}+Q$ on $[0,\infty),$ with $T_{ap}$ and $Q$ being the same as in Definition \ref{rk-mbo}. Since 
$$
(T\ast \varphi)(h)=\bigl(T_{ap}\ast \varphi\bigr)(h)+\Bigl( \bigl[ T-T_{ap}\bigr] \ast \varphi \Bigr)(h),\quad h\in {\mathbb R},\ \varphi \in {\mathcal D}^{\ast},
$$
and $T_{ap}\ast \varphi \in AP({\mathbb R} :X),$ $\varphi \in {\mathcal D}^{\ast},$ we need to prove that
$$
\lim_{h\rightarrow +\infty}\Bigl( \bigl[ T-T_{ap}\bigr] \ast \varphi \Bigr)(h)=0,\quad \varphi \in {\mathcal D}^{\ast}
$$
i.e., that:
\begin{align}\label{osamosam}
\lim_{h\rightarrow +\infty}\bigl \langle T-T_{ap} , \varphi(h-\cdot) \bigr \rangle=0,\quad \varphi \in {\mathcal D}^{\ast}.
\end{align}
Since supp$(\varphi(h-\cdot))\subseteq [0,\infty)$ for $h\geq \sup(\mbox{supp}(\varphi)),$ we have that \eqref{osamosam} is equivalent with
\begin{align*}
\lim_{h\rightarrow +\infty}\bigl \langle Q, \varphi(h-\cdot) \bigr \rangle=0,\quad \varphi \in {\mathcal D}^{\ast}.
\end{align*}
But, this follows from the equality $\langle Q, \varphi(h-\cdot)  \rangle =\langle Q_{h}, \check{\varphi}\rangle$ for all $\varphi \in {\mathcal D}^{\ast},$ and the fact that $Q\in B^{\prime \ast}_{+,0}(X).$
\end{rem}

\begin{rem}\label{basit-kil-osam-gomez}
Assume that $(M_{p})$ additionally satisfies (M.3). Then,
for the Beurling class, the assumptions in (ii) are equivalent to say that $F$ and $X$ are asymptotically almost periodic, resp. asymptotically almost automorphic (see Theorem \ref{slep-blind-auto-ultra}). 
We feel duty bound to say that, in this case, it seems very plausible that the assertion of Theorem \ref{slep-blind-auto-ultra} can be slightly generalized for $\omega$-ultradistributions (see \cite{gomezinjo} for more details). The interested reader may also try to 
prove analogues of \cite[Theorem 4.3.1, p. 123; Theorem 4.3.2, p. 124; Theorem 4.5.1, p. 128]{cheban} for asymptotically almost periodic (automorphic) ultradistributions of $\ast$-class.
\end{rem}

The assertion of \cite[Proposition 2]{ioana-asym} can be formulated for vector-valued almost periodic distributions and vector-valued almost periodic ultradistributions of Beurling class, with the corresponding sequence $(M_{p})$ satisfying (M.1), (M.2) and (M.3).
Now we would like to state the following automorphic version of \cite[Proposition 2]{ioana-asym} (vector-valued case):

\begin{prop}\label{low}
\begin{itemize}
\item[(i)] Consider the equation $X^{\prime}=f \cdot X +U$ in ${\mathcal D}^{\prime}(X),$ where $f\in {\mathcal E} \cap AA({\mathbb R} : {\mathbb C})$ and $U\in B^{\prime}_{AA}(X).$ If $T\in B^{\prime}_{AAA}(X)$ is a solution of this equation on $[0,\infty),$ then its almost automorphic part is  a solution of this equation on ${\mathbb R}.$   
\item[(ii)] Let $(M_{p})$ satisfy \emph{(M.1)}, \emph{(M.2)} and \emph{(M.3).} Consider the equation $X^{\prime}=f \cdot X +U$ in ${\mathcal D}^{\prime (M_{p})}(X),$ where $f\in {\mathcal E}^{(M_{p})} \cap AA({\mathbb R} : {\mathbb C})$ and $U\in B^{\prime (M_{p})}_{AA}(X).$ If $T\in B^{\prime (M_{p})}_{AAA}(X)$ is a solution of this equation on $[0,\infty),$ then its almost automorphic part is  a solution of this equation on ${\mathbb R}.$   
\end{itemize}
\end{prop}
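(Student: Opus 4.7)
The plan is to adapt Cioranescu's proof of \cite[Proposition 2]{ioana-asym} for the almost periodic case to the almost automorphic setting, with the supremum formula (Lemma \ref{beograd-leka}) playing the role of the standard observation that an almost periodic function vanishing at $+\infty$ must vanish identically.

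First I would decompose $T = T_{aa} + Q$ on $\mathcal{D}_0$ in accordance with Definition \ref{definisanie}, with $T_{aa} \in B^{\prime}_{AA}(X)$ and $Q \in B^{\prime}_{+,0}(X)$, and set $R := T_{aa}^{\prime} - f \cdot T_{aa} - U \in \mathcal{D}^{\prime}(X)$; the goal becomes to prove $R = 0$. Testing against $\varphi \in \mathcal{D}_{0}$ and using that $T^{\prime} = fT + U$ on $[0,\infty)$, substitution of the decomposition produces
\begin{align*}
\langle R, \varphi \rangle = \langle fQ - Q^{\prime}, \varphi \rangle, \quad \varphi \in \mathcal{D}_{0}.
\end{align*}
Since $B^{\prime}_{+,0}(X)$ is closed under differentiation and $fQ \in B^{\prime}_{+,0}(X)$ by the structural representation of elements of $B^{\prime}_{+,0}(X)$ as finite sums of derivatives of continuous functions vanishing at $+\infty$ (\cite[Proposition 1]{ioana-asym}) together with Leibniz's rule, for any $\varphi \in \mathcal{D}$ and for $h$ large enough that $\operatorname{supp}(\varphi(h-\cdot)) \subseteq [0,\infty)$ we have $(R \ast \varphi)(h) = \langle fQ - Q^{\prime}, \varphi(h-\cdot)\rangle \to 0$ as $h \to +\infty$.

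On the other hand, the standard closure properties of $B^{\prime}_{AA}(X)$---closure under differentiation and under multiplication by a smooth almost automorphic scalar function (which, via the structural representation $T_{aa} = \sum_{j} g_{j}^{(j)}$ with $g_{j} \in AA(\mathbb{R} : X)$ and Leibniz's rule, reduces to pointwise closure of $AA(\mathbb{R} : X)$ under multiplication by bounded smooth almost automorphic scalars)---yield $R \in B^{\prime}_{AA}(X)$, so $R \ast \varphi \in AA(\mathbb{R} : X)$ for every $\varphi \in \mathcal{D}$. Thus $R \ast \varphi$ is almost automorphic and tends to $0$ at $+\infty$; Lemma \ref{beograd-leka} forces $\|R \ast \varphi\|_{\infty} = \sup_{x \geq a}\|(R \ast \varphi)(x)\|$ for every $a \in \mathbb{R}$, and letting $a \to +\infty$ gives $R \ast \varphi \equiv 0$. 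Since $\varphi$ was arbitrary, $R = 0$ in $\mathcal{D}^{\prime}(X)$, which is the statement that $T_{aa}$ solves the equation on all of $\mathbb{R}$. Part (ii) proceeds along the same lines in the ultradistributional framework, with Definition \ref{rk-mbo} replacing Definition \ref{definisanie}, the ultradistributional closure properties noted after Definition \ref{rk-mbo} replacing their distributional analogues, and (M.3) invoked to guarantee that the structural characterization from Lemma \ref{mare-basitinjo}(iii) applies.

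The main obstacle is the verification that $f \cdot T_{aa}$ remains in $B^{\prime}_{AA}(X)$ (respectively $B^{\prime (M_{p})}_{AA}(X)$ in part (ii)); without this, $R \in B^{\prime}_{AA}(X)$ is unavailable and the decisive appeal to Lemma \ref{beograd-leka} cannot be made. The verification reduces through the structural representation of $T_{aa}$ and Leibniz's rule to the pointwise almost automorphy of products $f^{(k)} \cdot g_{j}$, which is inherited from the multiplier structure of $AA(\mathbb{R} : X)$ under multiplication by bounded smooth almost automorphic scalar functions.
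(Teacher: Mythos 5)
Your argument is correct, and its skeleton---decompose $T=T_{aa}+Q$ on ${\mathcal D}_{0}$, form the defect $R=T_{aa}^{\prime}-f\cdot T_{aa}-U$, check that $R$ coincides with $fQ-Q^{\prime}$ on ${\mathcal D}_{0}$ so that $(R\ast\varphi)(h)\rightarrow 0$ as $h\rightarrow+\infty$, and separately that $R\in B^{\prime}_{AA}(X)$ by closure under differentiation and under multiplication by $f$---is exactly the Cioranescu scheme the paper follows. Where you genuinely diverge is the final step. The paper concludes by invoking the Bochner-type double-limit characterization of almost automorphic (ultra)distributions from \cite{msd-nsjom} (for every sequence there is a subsequence $(a_{n})$ and an $S$ with $T_{a_{n}}\rightarrow S$ and $S_{-a_{n}}\rightarrow T$ weakly) and then repeating the proof of \cite[Proposition 2]{ioana-asym} almost verbatim, noting only that the two limits used there now come from distributional almost automorphy rather than Bochner's criterion. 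You instead apply the supremum formula of Lemma \ref{beograd-leka} to the almost automorphic function $R\ast\varphi$: since $\|R\ast\varphi\|_{\infty}=\sup_{x\geq a}\|(R\ast\varphi)(x)\|$ for every $a$ and the right-hand side tends to $0$, you get $R\ast\varphi\equiv 0$ directly, hence $R=0$. This is a legitimate and arguably cleaner closing move: it stays at the level of the smooth functions $R\ast\varphi$ and reuses a lemma the paper proves anyway for a different purpose, whereas the paper's route works with translates of the distribution itself. Two minor caveats, neither fatal: (a) both your proof and the paper's take for granted that multiplication by $f\in{\mathcal E}\cap AA({\mathbb R}:{\mathbb C})$ preserves $B^{\prime}_{AA}(X)$ and $B^{\prime}_{+,0}(X)$; pushed through Leibniz's rule and the structure theorems this tacitly uses boundedness (indeed almost automorphy) of the derivatives $f^{(k)}$, an assumption the paper also leaves implicit, so it is not a gap specific to your write-up; (b) in part (ii) the structural characterization you need is the one for $B^{\prime (M_{p})}_{AA}(X)$ from \cite{msd-nsjom}/\cite{mat-bilten}, not Lemma \ref{mare-basitinjo}(iii), which concerns the asymptotic classes---though (M.3) is indeed what makes that characterization available.
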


\begin{proof}
Since the pointwise product of an almost automorphic $X$-valued function with a scalar-valued almost automorphic function is again an 
almost automorphic $X$-valued function, we can use the structural characterization of $B^{\prime}_{AA}(X),$ resp. $B^{\prime (M_{p})}_{AA}(X),$  in order to see that these spaces are closed under pointwise multiplication with a scalar-valued function $f\in {\mathcal E} \cap AA({\mathbb R} : {\mathbb C}),$ resp. $f\in {\mathcal E}^{(M_{p})} \cap AA({\mathbb R} : {\mathbb C});$ furthermore, the almost automorphy of a bounded distribution, resp. bounded ultradistribution of Beurling class, is equivalent to say that, for every real sequence $(b_{n}),$ there exist a subsequence $(a_{n})$ of $(b_{n})$ and a vector-valued distribution $S \in {\mathcal D}^{\prime}(X)$, resp. ultradistribution $S \in {\mathcal D}^{\prime (M_{p})}(X),$ such that $\lim_{n\rightarrow \infty}\langle T_{a_{n}},\varphi \rangle=\langle S,\varphi \rangle$ for all $\varphi \in {\mathcal D},$ resp. $\varphi \in {\mathcal D}^{(M_{p})},$ and $\lim_{n\rightarrow \infty}\langle S_{-a_{n}},\varphi \rangle=\langle T, \varphi \rangle$ for all $\varphi \in {\mathcal D}$, resp. $\varphi \in {\mathcal D}^{(M_{p})};$ see \cite{msd-nsjom}. Using these facts, we can repeat almost literally the proof of \cite[Proposition 2]{ioana-asym} to deduce the validity of (i) and (ii); the only thing worth noting is that we do not need Bochner's criterion for the proof because the limits appearing on lines 10 and -4 on p. 258 of \cite{ioana-asym} follows, actally, from the almost automorphy of corresponding distributions. 
\end{proof}

\section[An application to systems of ordinary differential equations...]{An application to systems of ordinary differential equations in distribution and ultradistribution spaces}

Let $n\in {\mathbb N},$ and let $A=[a_{ij}]_{1\leq i,\ j\leq n}$ be a given complex matrix such that $\sigma(A) \subseteq \{z\in {\mathbb C}: \Re z < 0 \}.$ In this section, we analyze the existence of asymptotically almost periodic (automorphic) solutions of equation
\begin{align}\label{ackno}
X^{\prime}=AX+F,\quad X\in {\mathcal D}^{\prime}(X^{n})\ \mbox{ on } \ [0,\infty)
\end{align} 
and equation
\begin{align}\label{ackno-ultra}
X^{\prime}=AX+F,\quad X\in {\mathcal D}^{\prime \ast}(X^{n}) \ \mbox{ on } \ [0,\infty),
\end{align} 
where $F$ is an asymptotically almost periodic (automorphic) $X^{n}$-valued distribution in \eqref{ackno} and $F$ is an asymptotically almost periodic (automorphic) $X^{n}$-valued ultradistribution of $\ast$-class in \eqref{ackno-ultra}. By a solution of \eqref{ackno}, resp. \eqref{ackno-ultra}, we mean any distribution $X\in {\mathcal D}^{\prime}(X^{n}),$ resp. ${\mathcal D}^{\prime \ast}(X^{n}),$
such that \eqref{ackno}, resp. \eqref{ackno-ultra}, holds in distributional, resp. ultradistributional, sense on $[0,\infty).$

The main result of this section reads as follows:

\begin{thm}\label{egzist-ultra}
\begin{itemize}
\item[(i)] Let $F=[F_{1} \ F_{2} \cdot \cdot \cdot F_{n}]^{T}\in B^{\prime}_{AAP}(X^{n}),$ resp. 
$F=[F_{1} \ F_{2} \cdot \cdot \cdot F_{n}]^{T}\in B^{\prime}_{AAA}(X^{n}).$
Then there exists a solution
$X=[X_{1} \ X_{2} \cdot \cdot \cdot X_{n}]^{T}\in B^{\prime}_{AAP}(X^{n}),$ resp. $X=[X_{1} \ X_{2} \cdot \cdot \cdot X_{n}]^{T}\in B^{\prime}_{AAA}(X^{n})$ of \eqref{ackno}.
Furthermore, any distributional solution $X$ of \eqref{ackno} belongs to the space $B^{\prime}_{AAP}(X^{n}),$ resp. 
$B^{\prime}_{AAA}(X^{n}).$
\item[(ii)] Let $(M_{p})$ 
satisfy the conditions \emph{(M.1), (M.2)} and \emph{(M.3)',}
and let $F=[F_{1} \ F_{2} \cdot \cdot \cdot F_{n}]^{T}\in {\mathcal D}^{\prime \ast}_{L^{1}}(M_{p} : X^{n})$ be such that, for every $i\in {\mathbb N}_{n},$ there exist an ultradifferential operator $P_{i}(D)=\sum_{p=0}^{\infty}a_{i,p}D^{p}$
of $\ast$-class and bounded functions $f_{1,i},\ f_{2,i}\in AAP({\mathbb R} : X),$ resp. $f_{1,i},\ f_{2,i}\in AAA({\mathbb R} : X),$ for $1\leq i\leq n,$ such that $F_{i}=P_{i}(D)f_{1,i}+f_{2,i}$  on $[0,\infty).$ 
Then there exist ultradifferential operators $P_{ij}(D)$
of $\ast$-class, bounded functions $h_{ij}\in AAP({\mathbb R} : X),$ resp. $h_{ij}\in AAA({\mathbb R} : X)$ ($1\leq i,\ j\leq n$),
and bounded functions
$h_{2,i}\in AAP({\mathbb R} : X),$ resp. $h_{2,i}\in AAA({\mathbb R} : X)$ ($1\leq i\leq n$), such that $X=[X_{1} \ X_{2} \cdot \cdot \cdot X_{n}]^{T}\in B^{\prime \ast}_{AAP}(X^{n}),$ resp. $X=[X_{1} \ X_{2} \cdot \cdot \cdot X_{n}]^{T}\in B^{\prime \ast}_{AAA}(X^{n}),$ is a solution of \eqref{ackno-ultra}, where
$X_{i}=\sum_{j=1}^{n}P_{ij}(D)h_{ij} +h_{2,i}$ for $i\in {\mathbb N}_{n}.$ Furthermore, any ultradistibutional solution $X$ of $\ast$-class to the equation \eqref{ackno-ultra} has such a form.
\end{itemize}
\end{thm}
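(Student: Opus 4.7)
The strategy is to invert the operator $D-A$ by a causal Green's function. Since $\sigma(A)\subseteq\{\Re z<0\}$, there exist $M\geq 1$ and $\omega>0$ with $\|e^{tA}\|\leq Me^{-\omega t}$ for $t\geq 0$, so the $n\times n$ matrix kernel
$$
K(t):=e^{tA}\mathbf{1}_{[0,\infty)}(t)
$$
has entries in $L^{1}({\mathbb R})$ with exponential decay, and satisfies $K'-AK=\delta\otimes I_{n}$ in the sense of distributions (and of ultradistributions of $\ast$-class). The proposed solution is $X:=K\ast F$, componentwise $X_{i}=\sum_{j=1}^{n}K_{ij}\ast F_{j}$; this is well-defined because each $K_{ij}\in L^{1}$ and each $F_{j}$ is a bounded (ultra)distribution, and by construction $X'=AX+F$ holds on all of ${\mathbb R}$, in particular on $[0,\infty)$.

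For (i), decompose $F=F_{ap}+Q$ according to Definition \ref{definisanie}. Convolution with the $L^{1}$-matrix $K$ preserves $B^{\prime}_{AP}(X^{n})$, resp.\ $B^{\prime}_{AA}(X^{n})$: using $(K\ast T)\ast\varphi=K\ast(T\ast\varphi)$ together with the $(i)\Leftrightarrow(ii)$ equivalence recalled immediately before Definition \ref{definisanie} and the fact that $L^{1}\ast AP({\mathbb R}:X)\subseteq AP({\mathbb R}:X)$ (resp.\ $L^{1}\ast AA({\mathbb R}:X)\subseteq AA({\mathbb R}:X)$) in the bounded setting. Meanwhile, the exponential decay of $K$ together with an $\epsilon$/Fubini argument against a test function gives $K\ast Q\in B^{\prime}_{+,0}(X^{n})$. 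Hence $X\in B^{\prime}_{AAP}(X^{n})$, resp.\ $B^{\prime}_{AAA}(X^{n})$. For the statement about an arbitrary distributional solution, two solutions differ by $Y$ satisfying the homogeneous equation $Y'=AY$ on $[0,\infty)$; iterated use of the antiderivative $G^{-1}$ from Subsection \ref{vvult} componentwise identifies $Y$ on $[0,\infty)$ with a classical $t\mapsto e^{tA}c$, which decays exponentially and thus lies in $B^{\prime}_{+,0}(X^{n})$, so the AAP/AAA class is stable under the choice of solution.

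For (ii), the hypothesis is exactly item (iii) of Theorem \ref{slep-blind-auto-ultra} componentwise: $F_{i}=P_{i}(D)f_{1,i}+f_{2,i}$ on $[0,\infty)$ with $f_{1,i},f_{2,i}$ bounded AAP (resp.\ AAA). Since convolution commutes with constant-coefficient ultradifferential operators of $\ast$-class,
$$
X_{i}=\sum_{j=1}^{n}\Bigl[P_{j}(D)\bigl(K_{ij}\ast f_{1,j}\bigr)+K_{ij}\ast f_{2,j}\Bigr],
$$
and setting $h_{ij}:=K_{ij}\ast f_{1,j}$, $h_{2,i}:=\sum_{j}K_{ij}\ast f_{2,j}$, $P_{ij}:=P_{j}$ yields the claimed representation; the functions $h_{ij}$ and $h_{2,i}$ are bounded AAP (resp.\ AAA) because $L^{1}$-convolution preserves the bounded AAP (resp.\ AAA) class. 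The membership $X\in B^{\prime\ast}_{AAP}(X^{n})$ (resp.\ $B^{\prime\ast}_{AAA}(X^{n})$) then follows from the implication (iv)$\Rightarrow$(v) of Theorem \ref{slep-blind-auto-ultra}. Uniqueness of the structural form is handled exactly as in (i), with the exponentially decaying homogeneous contribution absorbed into $h_{2,i}$.

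The principal obstacle I anticipate lies not in producing a solution---the Green's function formula is essentially classical---but rather in the uniqueness half, where one must justify that an arbitrary distributional (resp.\ ultradistributional of $\ast$-class) solution of $Y'=AY$ on $[0,\infty)$ is given by a classical $e^{tA}c$. This requires an inductive use of the antiderivative operator $G^{-1}$ together with the elementary fact, recalled at the end of Subsection \ref{vvult}, that an $X$-valued (ultra)distribution with vanishing derivative on $[0,\infty)$ is constant there; only after this reduction does the exponential decay of $e^{tA}$ place the homogeneous contribution into $B^{\prime}_{+,0}(X^{n})$ (resp.\ $B^{\prime\ast}_{+,0}(X^{n})$), so that every solution inherits the asymptotic almost periodic (automorphic) structure of the inhomogeneity.
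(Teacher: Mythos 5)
Your argument is correct in substance but organized quite differently from the paper's. The paper first reduces the system to the scalar case $X'=\lambda X+P(D)f+g$, $\Re\lambda<0$, via \cite[Lemma 1, Theorem 3]{buzar-tressinjo}, and then runs a variation-of-constants computation entirely at the level of test functions: it multiplies by the integrating factor $e^{-\lambda\cdot}$, invokes the antiderivative operator $I(\varphi)$ and the fact recalled at the end of Subsection \ref{vvult} to pin down the solution up to a constant, and then expands everything by the product rule, Fubini and integration by parts into an explicit representation $\sum_j P_{ij}(D)h_{ij}+h_{2,i}$, using (M.3$'$) to check absolute convergence of the resulting numerical series. You instead invert $D-A$ globally by the causal kernel $K(t)=e^{tA}\mathbf{1}_{[0,\infty)}(t)$ and rely on the stability of $AP$, $AA$, $C_{0}$ and their distributional counterparts under convolution with an exponentially decaying $L^{1}$ matrix, plus commutation of $P(D)$ with convolution; this is cleaner, treats (i) and (ii) uniformly (the paper proves only (ii)), and largely hides the series estimates inside the continuity of $P(D)$ on ${\mathcal D}^{\prime\ast}$. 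The price is that several steps you treat as routine deserve explicit justification in this framework: (a) $K\ast F$ must be defined for $F\in{\mathcal D}^{\prime\ast}_{L^{1}}(M_{p}:X^{n})$, e.g.\ via $\langle K\ast F,\varphi\rangle=\langle F,\check{K}\ast\varphi\rangle$ after checking $\check{K}\ast\varphi\in{\mathcal D}_{L^{1}}((M_{p}),h)$, since the paper's preliminaries only set up convolution with test functions and with ${\mathcal E}^{\prime\ast}$; (b) your appeal to (iv)$\Rightarrow$(v) of Theorem \ref{slep-blind-auto-ultra} does not literally apply to $X_{i}=\sum_{j}P_{j}(D)h_{ij}+h_{2,i}$, which involves $n$ different operators acting on $n$ different functions --- apply the implication to each summand $P_{j}(D)h_{ij}$ separately and use that $B^{\prime\ast}_{AAP}(X)$ (resp.\ $B^{\prime\ast}_{AAA}(X)$) is a vector space, or invoke Lemma \ref{mare-basitinjo}(i) inductively; (c) $K\ast Q\in B^{\prime}_{+,0}$ requires the structural representation of $B^{\prime}_{+,0}$ (the vector-valued analogue of \cite[Proposition 1]{ioana-asym} mentioned in Section \ref{collado-automorphic-stankovic}), since $\check K\ast\varphi$ is no longer compactly supported. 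Your treatment of the uniqueness half --- reducing the homogeneous solution to $e^{tA}c$ via the integrating factor and the $G^{-1}$ fact, then absorbing it into the vanishing part --- matches the role the constant $c\int e^{\lambda u}\varphi(u)\,du$ plays in the paper's computation, and is sound provided one reads the ``furthermore'' claims, as the paper implicitly does, as statements about the restriction of the solution to $[0,\infty)$.
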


\begin{proof}
We will prove only (ii). Since $\sigma(A) \subseteq \{z\in {\mathbb C}: \Re z < 0 \}$, \cite[Lemma 1]{buzar-tressinjo} and a careful inspection of the proof of \cite[Theorem 3, pp. 117-118]{buzar-tressinjo} yield that it is sufficient to prove the required assertion in one-dimensional case. So, let 
$\Re \lambda<0$ and let us consider the equation
\begin{align}\label{ackno-ultra-prx}
X^{\prime}=\lambda X+P(D)f+g,\quad X\in {\mathcal D}^{\prime \ast}(X)\ \mbox{ on } \ [0,\infty),
\end{align} 
with given bounded functions $f,\ g\in AAP({\mathbb R} : X),$ resp. $f,\ g\in AAA({\mathbb R} : X),$ and $P(D)=\sum_{p=0}^{\infty}a_{p}D^{p}.$ Then $(e^{-\lambda \cdot}X)^{\prime}=e^{-\lambda \cdot}[P(D)f+g]$ on $[0,\infty),$ which implies $\langle e^{-\lambda \cdot}X,\varphi \rangle =-\langle e^{-\lambda \cdot}[P(D)f+g],I(\varphi) \rangle +\langle \mbox{Const.}, \varphi \rangle ,$ $\varphi \in {\mathcal D}^{\ast}_{0}.$ Therefore, there exists a constant $c\in {\mathbb C}$ such that
\begin{align*}
\langle X,\varphi \rangle &= \Biggl \langle P(D)f+g , e^{-\lambda \cdot}\int^{\cdot}_{-\infty}\Bigl[ e^{\lambda t}\varphi(t)-\nu(t)\int^{\infty}_{-\infty}e^{\lambda u}\varphi (u)\, du\Bigr] \, dt \Biggr \rangle 
\\ &+c\int^{\infty}_{-\infty}e^{\lambda u}\varphi(u)\, du,\quad \varphi \in {\mathcal D}^{\ast}_{0}.
\end{align*}
Applying product rule for the term containing the function $f(\cdot),$ as well as the Fubini theorem and partial integration for the term containing the function $g(\cdot),$ we get:
\begin{align*}
\langle X,\varphi \rangle &=\sum_{p=0}^{\infty}(-1)^{p+1}a_{p}\int^{\infty}_{-\infty}f(t)\Bigl[ e^{-\lambda \cdot}I(e^{\lambda \cdot}\varphi)(\cdot)\Bigr]^{(p)}(t)\, dt +c\int^{\infty}_{-\infty}e^{\lambda u}\varphi(u)\, du
\\ &+\int^{\infty}_{-\infty}g(t)e^{-\lambda t} \int^{t}_{-\infty}\Bigl[e^{\lambda u}\varphi(u)-\nu(u)\int^{\infty}_{-\infty}e^{\lambda v}\varphi (v)\, dv\Bigr] \, du \, dt
\\ & =\sum_{p=0}^{\infty}a_{p}\int^{\infty}_{-\infty}f(t)\sum_{k=1}^{p}(-1)^{k+1}{p \choose k}\lambda^{p-k}e^{-\lambda t}\Biggl[\sum_{j=0}^{k-1}\binom{k-1}{j}\lambda^{k-j}e^{\lambda t}\varphi^{(j)}(t)
\\ &-\nu^{(k-1)}(t)\int^{\infty}_{-\infty}e^{\lambda u}\varphi(u)\, du \Biggr]
\\ & -\sum_{p=0}^{\infty}a_{p}\int^{\infty}_{-\infty}f(t) \lambda^{p}e^{-\lambda t}\int^{t}_{-\infty}\Biggl[ e^{\lambda u}\varphi(u) -\nu (t) \int^{\infty}_{-\infty}e^{\lambda v}\varphi (v)\, dv\Biggr]\, du \, dt
\\ &
+c\int^{\infty}_{0}e^{\lambda u}\varphi(u)\, du
+\int^{\infty}_{0}\Biggl[\int^{t}_{0}e^{\lambda (t-s)}g(s)\, ds\Biggr] \varphi(t)\, dt
\\& - \int^{\infty}_{0}e^{\lambda v}\Biggl[ \int^{\infty}_{-\infty}\nu(t)\int^{t}_{0}e^{-\lambda s}g(s)\, ds \, dt\Biggr] \varphi(v)\, dv
, \quad \varphi \in {\mathcal D}^{\ast}_{0}.
\end{align*}
Applying again the Fubini theorem for the term containing the part $\nu^{(k-1)}(\cdot)$ as well as the Fubini theorem and partial integration for the term containing the part $f(t) \lambda^{p}e^{-\lambda t},$ we get 
\begin{align*}
\langle X,\varphi \rangle &=\int^{\infty}_{0}f(t) \Biggl[ \sum_{p'=p+1}^{\infty}a_{p'}\sum_{k=p}^{p'}(-1)^{k+1}{p' \choose k}{k-1 \choose p}\lambda^{p'-p} \Biggr] \varphi^{(p)}(t)\, dt
\\&-\int^{\infty}_{0}e^{\lambda v} \Biggl[ \sum_{p=0}^{\infty}a_{p}\int^{2}_{1}f(t)\sum_{k=1}^{p} (-1)^{k+1}{p\choose k}\lambda^{p-k}e^{-\lambda t}\nu^{(k-1)}(t)\, dt\Biggr] \varphi(v)\, dv
\\&-\int^{\infty}_{0}
\Biggl[\sum_{p=0}^{\infty}a_{p}\lambda^{p}\int^{t}_{0}e^{\lambda (t-s)}f(s)\, ds\Biggr] \varphi(t)\, dt
\\& +\int^{\infty}_{0}e^{\lambda v}\Biggl[\sum_{p=0}^{\infty}a_{p}\lambda^{p}\int^{\infty}_{-\infty}\nu(t)\int^{t}_{0}e^{-\lambda s}f(s)\, ds \, dt \Biggr]\varphi(v)\, dv
+c\int^{\infty}_{0}e^{\lambda u}\varphi(u)\, du
\\& +\int^{\infty}_{0}\Biggl[\int^{t}_{0}e^{\lambda (t-s)}g(s)\, ds\Biggr] \varphi(t)\, dt
\\& - \int^{\infty}_{0}e^{\lambda v}\Biggl[ \int^{\infty}_{-\infty}\nu(t)\int^{t}_{0}e^{-\lambda s}g(s)\, ds \, dt\Biggr] \varphi(v)\, dv
, \quad \varphi \in {\mathcal D}^{\ast}_{0}.
\end{align*}
The series in the first and fourth addend converge, whereas the functions $\int^{\cdot}_{0}e^{\lambda (\cdot-s)}g(s)\, ds,$ $ce^{\lambda \cdot}$ are asymptotically almost periodic, resp. asymptotically almost automorphic, due to our assumption $\Re \lambda <0$ (see \cite{nova-mono} for more details).
The series $\sum_{p=0}^{\infty}a_{p}\lambda^{p}\int^{\infty}_{-\infty}\nu(t)\int^{t}_{0}e^{-\lambda s}f(s)\, ds \, dt$ converges uniformly for $t\geq 0$ and, to complete the whole proof, it suffices to show that the series 
$$
\sum_{p=0}^{\infty}a_{p}\int^{2}_{1}f(t)\sum_{k=1}^{p} (-1)^{k+1}{p\choose k}\lambda^{p-k}e^{-\lambda t}\nu^{(k-1)}(t)\, dt
$$
is absolutely convergent. Towards this end, observe that (M.3') yields the existence of a finite constant $c'>0$ such that
the absolute value of 
$$
a_{p}\int^{2}_{1}f(t)\sum_{k=1}^{p} (-1)^{k+1}{p\choose k}\lambda^{p-k}e^{-\lambda t}\nu^{(k-1)}(t)\, dt
$$
does not exceed $\|f\|_{\infty}\mbox{Const.}|a_p|\sum_{k=0}^{p}\frac{M_{k}}{h^{k}}\leq \|f\|_{\infty}\mbox{Const.}|a_p|(p+1)m^{p}\frac{M_{p}}{h^{p}},$ where $m:=\sum_{p=1}^{\infty}1/m_{p}<\infty.$ This completes the proof in a routine manner.
\end{proof}

\section*{Acknowledgement}
The author is partially supported by
grant 174024 of Ministry of Science and Technological Development,
Republic of Serbia.

\end{document}